\numberwithin{equation}{section}
\theoremstyle{plain}
\newtheorem{Thm}{Theorem}[section]
\newtheorem{Lem}[Thm]{Lemma}
\newtheorem{Cor}[Thm]{Corollary}
\newtheorem{Prop}[Thm]{Proposition}
 \theoremstyle{definition}
\newtheorem{?}[Thm]{Problem}
\newcommand{\sys}{\operatorname{sys}}
\newcommand{\volwp}{\operatorname{vol_{WP}}}
\newcommand{\Mg}{\mathcal{M}_g}
\leaders\hbox{\normalfont$\m@th\mkern \@dotsep mu\hbox{.}\mkern \@dotsep mu$}\hfill}
\title{Intersection Number, Length, and Systole on Compact Hyperbolic Surfaces}
 \author{Tina Torkaman}
\begin{document}




\date{}

 \maketitle



\thispagestyle{empty}

Published in \emph{Geometriae Dedicata}.
\bigskip

\begin{abstract}

The interaction strength $I(X)$ of a compact hyperbolic surface $X$ is the best upper bound for the intersection number of two closed geodesics divided by the product of their lengths. Let $\mathcal{M}_g$ be the moduli space of compact hyperbolic surfaces of genus $g$ and $\sys(X)$ the length of a shortest closed geodesic on $X \in \mathcal{M}_g$. We determine the asymptotic behavior of $I(X)$, as $X \to \infty$ in $\mathcal{M}_g$, in terms of $\sys(X)$. We also determine the approximate behavior of the minimum of $I(X)$ over $\mathcal{M}_g$, as $g \to \infty$.

\end{abstract}

\tableofcontents

\section{Introduction}
\label{intro}

%
%

Bounds on the intersection number of closed curves on surfaces have been studied in various settings \cite{Basmj}\cite{Chas.Phillips.pair.pants}\cite{Chas.Phillips.punc.torus}\cite{Curt.Moi}\cite{cheboui}. Let $\mathcal{M}_{g,n}$ be the moduli space of finite-volume hyperbolic surfaces of genus $g\geq 2$ with $n$ cusps. When $n=0$ we write $\mathcal{M}_g$. In this paper, we study the optimal upper bound on the (geometric) intersection number of closed geodesics on $X \in \mathcal{M}_{g,n}$ in terms of the product of their lengths.  

\textbf{Intersection number.} Given the closed geodesics $\gamma_1,\gamma_2$ in $X$, let $i(\gamma_1,\gamma_2)$ denote {\em the (geometric) intersection number} of $\gamma_1,\gamma_2$, which is the number of transversal intersection points (counted with multiplicity) between them. The intersection number is an invariant of the homotopy classes $[\gamma_1], [\gamma_2]$; in particular, it is independent of the geometry of $X$. Let $\ell_X(\gamma)$ denote the length of $\gamma$. We know $i(\gamma_1,\gamma_2)/(\ell(\gamma_1)\ell(\gamma_2))$ is bounded above by a constant depending on $X \in \mathcal{M}_g$ \cite{Basmj}.   
 We define

\begin{equation}\label{equ: I}
    I(X):= \sup \limits_{\gamma_1,\gamma_2} \frac{i(\gamma_1,\gamma_2)}{\ell_X(\gamma_1)\ell_X(\gamma_2)},
\end{equation}
where the supremum is over all pairs of closed geodesics.
We refer to $I(X)$ as {\em the interaction strength} of $X$, since it controls the best upper bound on $i(\gamma_1,\gamma_2)$ in terms of $\ell_X(\gamma_1)\ell_X(\gamma_2)$. Let $\mathcal{G}_X$ be the set of all closed geodesics and $\sys(X):=\min \limits_{\gamma \in \mathcal{G}_X} \ell_X(\gamma)$ denote the systole of $X$. 

\textbf{Asymptotic behavior.} When $X \in \mathcal{M}_g$, it is easy to see that $I(X)\to \infty$ as $X \to \infty$ in $\Mg$, see \S \ref{sec: systole}, and 
$$
I(X)\leq 4/\sys(X)^2,
$$
see Proposition \ref{prop: int.bnd}.
                 
 It is well known that $X \to \infty$ in $\Mg$ if and only if $\sys(X)\to 0$ \cite{Mumf}. 
Our main result describes the exact asymptotic behavior of $I(X)$ as a function on $\Mg$ in terms of $\sys(X)$.

\begin{Thm}\label{theorem: main.order}  
For $g\geq 2$, we have: 
$$
I(X)\sim \frac{1}{2\sys(X)\log(1/\sys(X))},
$$
as $X \to \infty$ in $\Mg$.
\end{Thm}

The notation $\sim$ means that their ratio tends to $1$.

Let $\gamma_1$ be a closed geodesic with length $\sys(X)$. We can find a closed geodesic $\gamma_2$ that intersects $\gamma_1$ in two points and whose length is at most about $4\log(1/\sys(X))$. The normalized intersection number $i(\gamma_1,\gamma_2)/(\ell_X(\gamma_1)\ell_X(\gamma_2))$ is a lower bound for $I(X)$ and is at least about $ 1/(2\sys(X)\log(1/\sys(X)))$; see Lemma \ref{lemma: geod.int.sys} and Figure \ref{fig: sys(X).main}.

\begin{figure}[H]
    \centering

\tikzset{every picture/.style={line width=0.75pt}} 

\begin{tikzpicture}[x=0.75pt,y=0.75pt,yscale=-1,xscale=1]

\draw    (90.2,107.2) .. controls (143.2,76.2) and (162.2,160.2) .. (301.2,153.2) ;
\draw    (75.2,145.2) .. controls (79.1,190.38) and (116.73,194.06) .. (171.2,184.2) .. controls (225.67,174.34) and (268.9,161.72) .. (320.2,177.2) ;
\draw    (301.2,153.2) .. controls (481.2,144.2) and (509.2,100.2) .. (512.2,170.2) ;
\draw    (320.2,177.2) .. controls (332.41,180.75) and (343.02,184.29) .. (355.36,188.03) .. controls (367.7,191.78) and (503.62,225.11) .. (512.2,170.2) ;
\draw    (75.2,145.2) .. controls (74.2,130.2) and (80.2,116.2) .. (90.2,107.2) ;
\draw    (420.2,176.2) .. controls (441.4,188.4) and (469.2,188.2) .. (488,171) ;
\draw    (91,147) .. controls (104.2,159.2) and (123.2,166.2) .. (162.2,148.2) ;
\draw    (102,154) .. controls (116.2,144.2) and (142.2,146.2) .. (149.2,154.2) ;
\draw    (433.2,181.2) .. controls (446.2,176.2) and (466.2,168.2) .. (477,178) ;
\draw [color={rgb, 255:red, 208; green, 2; blue, 27 }  ,draw opacity=1 ]   (287.3,170.03) .. controls (277.67,170.8) and (278.67,152.8) .. (285.3,154.03) ;
\draw [color={rgb, 255:red, 208; green, 2; blue, 27 }  ,draw opacity=1 ]   (149.2,154.2) .. controls (148.42,167.18) and (205.75,162.45) .. (269.19,159.01) .. controls (342.16,155.05) and (423.21,152.8) .. (433.2,181.2) ;
\draw [color={rgb, 255:red, 208; green, 2; blue, 27 }  ,draw opacity=1 ] [dash pattern={on 0.84pt off 2.51pt}]  (149.2,154.2) .. controls (150.76,136.86) and (210.81,146.42) .. (275.31,159.29) .. controls (339.81,172.15) and (433.3,201.03) .. (433.2,181.2) ;
\draw [color={rgb, 255:red, 208; green, 2; blue, 27 }  ,draw opacity=1 ] [dash pattern={on 0.84pt off 2.51pt}]  (285.3,154.03) .. controls (292.53,153.15) and (292.53,168.15) .. (287.3,170.03) ;

\draw (266,130.4) node [anchor=north west][inner sep=0.75pt]  [font=\normalsize]  {$\gamma_1 $};
\draw (416,150.4) node [anchor=north west][inner sep=0.75pt]  [font=\normalsize]  {$\gamma_2 $};

\end{tikzpicture}

   \caption{}
    \label{fig: sys(X).main}
\end{figure}

Since $I(X)$ is continuous (see Lemma \ref{lemma: I.cont}), from Theorem \ref{theorem: main.order}, we obtain an estimate of $I(X)$ up to a multiplicative constant depending on $g$ for all surfaces in $\Mg$ (see Corollary \ref{cor: I.asymp.c_g}). 
 
 The notation $A \asymp B$ $( A \asymp_g B)$ means $A \in [B/c,cB]$ $(A \in [B/c_g,c_gB])$ for an implicit constant $c>0$ $(c_g>0)$.  We say that $A$ and $B$ are asymptotically equivalent when $A \sim B$ and have the same order of magnitude when $A \asymp B$.

Moreover, $I(X)$ is a proper continuous function, according to Theorem \ref{theorem: main.order} and Lemma \ref{lemma: I.cont}, so it attains its minimum at a point in $\Mg$. The order of magnitude of the minimum of $I$ is as follows. 

\begin{Prop}\label{Prop: I.min}
We have:
$$
\min \limits_{X \in \Mg} I(X) \asymp \frac{1}{(\log g)^2}, 
$$
uniformly in $g$.
\end{Prop}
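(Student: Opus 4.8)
\emph{Plan.} There are two inequalities to establish: $\min_{X\in\Mg} I \lesssim (\log g)^{-2}$ and $\min_{X\in\Mg} I \gtrsim (\log g)^{-2}$, the minimum being attained by the properness already noted as a consequence of Theorem~\ref{theorem: main.order}. The upper bound is short: for each $g$ there is a surface $X_g\in\Mg$ with systole of logarithmic size, $\sys(X_g)\ge c_0\log g$ (the arithmetic constructions of Buser and Sarnak, via congruence covers), so Proposition~\ref{prop: int.bnd} gives at once $\min_{X\in\Mg} I(X)\le I(X_g)\le 4/\sys(X_g)^2\le 4c_0^{-2}(\log g)^{-2}$.

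For the lower bound I would show $I(X)\gtrsim(\log g)^{-2}$ for \emph{every} $X\in\Mg$. Let $\gamma_1$ be a systole; it is simple, and $s:=\ell_X(\gamma_1)=\sys(X)$ satisfies $s\le 2\log g+O(1)$, because a disk of radius $s/2$ embeds in $X$ while $\operatorname{Area}(X)=4\pi(g-1)$. The one geometric input needed is a genus-explicit form of the mechanism behind Theorem~\ref{theorem: main.order} (compare \S\ref{lemma: geod.int.sys}): there is a universal constant $C$ and a closed geodesic $\gamma_2$ with $i(\gamma_1,\gamma_2)\ge 1$ and $\ell_X(\gamma_2)\le C\big(\log g+\log_+(1/s)\big)$, where $\log_+ t:=\max\{0,\log t\}$. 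Granting this,
$$
I(X)\ \ge\ \frac{i(\gamma_1,\gamma_2)}{\ell_X(\gamma_1)\,\ell_X(\gamma_2)}\ \ge\ \frac{1}{C\,s\,\big(\log g+\log_+(1/s)\big)}.
$$
An elementary check shows $s\mapsto s\big(\log g+\log_+(1/s)\big)$ is increasing on $(0,\,2\log g+O(1)]$, hence bounded there by its value at $s\asymp\log g$, which is $O((\log g)^2)$; so $I(X)\gtrsim(\log g)^{-2}$ uniformly in $X$ and $g$. (For $s$ bounded away from $2\log g$ this in fact bounds $I(X)$ below by a positive constant, far more than needed; the estimate is sharp only when $s\asymp\log g$, i.e.\ for the roundest surfaces, consistently with the upper bound.)

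It remains to justify the geometric input, which I expect to be the crux. One runs the collar/area argument of \S\ref{lemma: geod.int.sys} while tracking the dependence on $g$. Let $T$ be the maximal embedded tubular neighbourhood of $\gamma_1$, of half-width $w$; since $2s\sinh w=\operatorname{Area}(T)\le 4\pi(g-1)$, we get $w\le\operatorname{arcsinh}\!\big(2\pi(g-1)/s\big)\le\log g+\log_+(1/s)+O(1)$. The frontier of $T$ provides an essential geodesic arc $\alpha$ with endpoints on $\gamma_1$, perpendicular to $\gamma_1$ there, of length $\le 2w$. If $\alpha$ meets both sides of $\gamma_1$, concatenating it with a subarc of $\gamma_1$ and straightening yields $\gamma_2$ with $i(\gamma_1,\gamma_2)\ge1$ and $\ell_X(\gamma_2)\le 2w+s$; the only way $\alpha$ can be forced to return to the same side is if $\gamma_1$ separates, in which case one instead produces a geodesic crossing $\gamma_1$ twice by splicing together shortest essential arcs in the two complementary pieces, whose lengths are again controlled by the area bound applied on each side. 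Either way $\ell_X(\gamma_2)\le C(\log g+\log_+(1/s))$. The real work — and the only genuine obstacle — is carrying out this separating/non-separating case analysis carefully and uniformly; beyond that, the one point deserving a remark is that the Buser--Sarnak examples (or an adequate substitute) are available in every genus, which is known. Combining the two displayed bounds gives $\min_{X\in\Mg} I(X)\asymp(\log g)^{-2}$.
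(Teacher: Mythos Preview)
Your proposal is correct, and the upper bound is exactly the paper's argument (Buser--Sarnak surfaces plus Proposition~\ref{prop: int.bnd}). For the lower bound, however, the paper takes a different and somewhat cleaner route. Rather than using the systole $\gamma_1$ together with a crossing curve $\gamma_2$ and then optimizing $s\,(\log g+\log_+(1/s))$ over $s\in(0,2\log g+O(1)]$, the paper shows directly (Lemmas~\ref{lemma: short.pants} and~\ref{lemma: fig8}) that every $X\in\Mg$ contains a pair of pants with all three cuffs of length $\le c_0\log g$, and hence a figure-eight geodesic $\gamma$ with $\ell_X(\gamma)\le c_0\log g$; since $i(\gamma,\gamma)=1$, this gives $I(X)\ge 1/(c_0\log g)^2$ in one stroke. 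The short-pants lemma is itself proved by the same collar-growing/area argument you invoke, but the pentagon formula then converts the width bound into a cuff-length bound, so one never has to carry the systole $s$ as a separate parameter. Your approach has the virtue of recycling the mechanism behind Theorem~\ref{theorem: main.order} (indeed, your ``geometric input'' is precisely a genus-explicit reading of Lemma~\ref{lemma: geod.int.sys}, whose proof already contains the bound $r_0^i\le\sinh^{-1}(4\pi(g-1)/\sys(X))$), and it makes transparent that the lower bound is tight only for surfaces with $\sys(X)\asymp\log g$; the paper's figure-eight argument is shorter but hides this.
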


\textbf{Finite volume case.} Let $X \in \mathcal{M}_{g,n}$. When $n\geq 1$, $I(X)=\infty$. In fact, the intersection number can be exponentially large in terms of length. For example, a closed geodesic $\gamma_n$ that turns around a cusp $n$ times has length $\ell_X(\gamma_n)\sim 2\log n$ and $i(\gamma_n,\gamma_n) \sim 2n$ for $n \in \mathbb{N}$. 
However, when closed geodesics are restricted to a compact subset of $X$, we obtain a result similar to Theorem \ref{theorem: main.order}. Given a subsurface $Y\subset X$, define  
$$
I(Y) := \sup \limits_{\gamma_1,\gamma_2 \subset Y} \frac{i(\gamma_1,\gamma_2)}{\ell_X(\gamma_1)\ell_X(\gamma_2)},
$$
where the supremum is over all pairs of closed geodesics in $Y$. Let $X_r$ be the compact subset of $X$ obtained by removing, for each cusp, the standard horoball bounded by a horocycle of length $r<1$. 

\begin{Thm}\label{thrm: finite.vol}
 Fix $g>1, n>0$, and let $X$ be a finite-volume hyperbolic surface of genus $g$ with $n$ cusps. For $s= \min(\sys(X),1/2)$, we have:
  $$
  I(X_r)\sim \max \left ( \frac{1}{2s\log(1/s)}, \frac{1}{r(\log(1/r))^2} \right),
 $$
 as $\min(s,r) \to 0$.
\end{Thm}

This also gives a global estimate of $I(X_r)$ up to a multiplicative constant for all $X \in \mathcal{M}_{g,n}$ (see Corollary \ref{cor: finite.vol}).

\textbf{Complements.} Here are some related results.
\begin{itemize}
    \item One can also consider the following similar quantities: 
    $$
    I_{\Delta}(X):=\sup_{\gamma} \frac{i(\gamma,\gamma)}{\ell_X(\gamma)^2},
    $$
    where the supremum is over all closed geodesics and
    $$
    I_{simple}(X):= \sup_{\gamma_1,\gamma_2} \frac{i(\gamma_1,\gamma_2)}{\ell_X(\gamma_1)\ell_X(\gamma_2)},
    $$
    where the supremum is over all pairs of simple closed geodesics.
    
     We will see that these have the same order of magnitude as $I(X)$. More precisely, we have:
     $$
     I_{\Delta}(X) \in [I(X)/2,I(X)],
     $$ 
     see Proposition \ref{prop: I_delta=I}, and
     
     $$
     I_{simple}\asymp_g I(X),
     $$
     see Corollary \ref{cor: I_simple=I}. Moreover, $I_{simple}(X)/I(X) \to 1$ as $X \to \infty$ in $\Mg$.
     
    \item Although $I(X) \to \infty$ as $X \to \infty$ in $\Mg$, the expected interaction strength with respect to the Weil–Petersson measure is still finite. In other words, we have:
    
    \begin{Cor}\label{cor: exp}
    The expected value of the interaction strength on $\Mg$, equipped with the \textit{Weil-Petersson} measure, is finite.
    \end{Cor}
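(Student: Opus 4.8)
The plan is to bound $\int_{\Mg} I(X)\,d\volwp(X)$ directly by splitting $\Mg$ into a thick and a thin part. Fix a small $\epsilon_0>0$. On the thick part $\{X:\sys(X)\ge \epsilon_0\}$, Proposition \ref{prop: int.bnd} gives $I(X)\le 4/\sys(X)^2\le 4/\epsilon_0^2$, so since the total Weil--Petersson volume $\volwp(\Mg)$ is finite, the integral of $I$ over the thick part is finite. The whole point is therefore to control the contribution of the thin part, where $I(X)$ is large but the Weil--Petersson measure is small.

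On the thin part, Theorem \ref{theorem: main.order} (equivalently Corollary \ref{cor: I.asymp.c_g}) provides a constant $C_g$ such that $I(X)\le C_g/\bigl(\sys(X)\log(1/\sys(X))\bigr)$ whenever $\sys(X)<\epsilon_0$, provided $\epsilon_0$ is chosen small enough (the complement of a compact set in $\Mg$ is contained in a thin part). I would then decompose the thin part dyadically, setting $A_k:=\{X:2^{-k-1}<\sys(X)\le 2^{-k}\}$ for $k\ge k_0$, where $2^{-k_0}\le\epsilon_0$. On $A_k$ one has $I(X)\le C_g\,\frac{2^{k+1}}{k\log 2}$, hence
\[
\int_{\{\sys<\epsilon_0\}} I\,d\volwp \;\le\; \sum_{k\ge k_0}\frac{2^{k+1}C_g}{k\log 2}\,\volwp\bigl(\{X\in\Mg:\sys(X)\le 2^{-k}\}\bigr).
\]

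The remaining ingredient, which I expect to be the main (though standard) step, is the estimate $\volwp(\{X\in\Mg:\sys(X)\le\epsilon\})\le c_g\,\epsilon^{2}$ for $\epsilon$ small. For $\epsilon<\epsilon_0$ the systole of $X$ is realized by a simple closed geodesic, so this set is contained in $\bigcup_\gamma\{X:\ell_\gamma(X)\le\epsilon\}$, the union taken over the finitely many topological types of simple closed curves on a genus-$g$ surface. By Mirzakhani's integration formula, $\volwp(\{X:\ell_\gamma(X)\le\epsilon\})=\int_0^\epsilon \ell\,V_{g,\gamma}(\ell)\,d\ell$, where $V_{g,\gamma}(\ell)$ is a product of Weil--Petersson volumes of the pieces obtained by cutting along $\gamma$ and is bounded uniformly for $\ell\le\epsilon_0<1$; this yields the $O(\epsilon^2)$ bound after summing over $\gamma$. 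Substituting $\epsilon=2^{-k}$, the series above is dominated by $\sum_{k\ge k_0}\frac{2^{-k}}{k}$, which converges. Thus $\int_{\Mg}I\,d\volwp<\infty$, and normalizing by $\volwp(\Mg)$ gives the finiteness of the expected value.

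I would finally remark that the logarithmic improvement in Theorem \ref{theorem: main.order} is genuinely needed: with only the bound $I(X)\le 4/\sys(X)^2$ the corresponding dyadic series is $\sum_k 4^k\cdot 2^{-2k}=\sum_k 1$, which diverges, so Corollary \ref{cor: exp} is a real consequence of the sharp asymptotics rather than of the elementary bound.
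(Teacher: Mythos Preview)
Your proposal is correct and follows essentially the same route as the paper: split into thick and thin parts, use a dyadic decomposition of the thin part, invoke Mirzakhani's estimate $\volwp(\Mg^{\epsilon})=O(\epsilon^{2})$, and bound $I(X)$ via Theorem~\ref{theorem: main.order}. The paper simply cites Mirzakhani for the volume bound rather than sketching it, and it drops the logarithm immediately (using only $I(X)\le c_g/\sys(X)$, which already gives a convergent series $\sum 2^{-k}$); so your final remark could be sharpened slightly---any bound $I(X)=O(1/\sys(X))$ suffices, not the full logarithmic gain---but the substance is the same.
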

    
    This is an immediate consequence of Theorem \ref{theorem: main.order} and Mirzakhani's result about the volume of $\Mg^{\epsilon}$, the $\epsilon$-thin part of $\Mg$  \cite[Thm. 4.1]{Mir.WP}. Note that the upper bound $4/\sys(X)^2$ is insufficient to obtain Corollary \ref{cor: exp}. See \S \ref{sec: random} for more details.
     
\end{itemize}
 \textbf{Outline of the proof of Theorem \ref{theorem: main.order}.} The main steps of the proof are as follows. In order to find the lower bound for $I(X)$, we consider a systole, $i.e.$, a closed geodesic with the shortest length and a shortest closed geodesic that intersects it at two points (as we explained when we described Figure \ref{fig: sys(X).main}). In order to find the required upper bound, we consider a \emph{proper pants decomposition} $\mathcal{P}$ of $X$, that is, a pants decomposition in which the length of every curve is less than the Bers' constant (a constant depending on $g$); see \S \ref{sec: prem}. We then split $X$ into thin and thick parts, where the thin parts are annular neighborhoods of the curves in $\mathcal{P}$ (they are contained in their collars), and the thick parts are the complements of the thin parts (see \S\ref{sec: prem}). The following inequality for variables $I_i,\ell_i,\ell'_i \geq 0, \, i=1,2$ :

$$
\frac{I_1+I_2}{(\ell_1+\ell_2)(\ell_1'+\ell_2')} \leq \max(\frac{I_1}{\ell_1\ell_1'},\frac{I_2}{\ell_2\ell_2'}),
$$
implies that it is enough to obtain the required upper bound for each part separately, and we will see that the thin part(s) containing the systole(s) have the main contribution to $I(X)$. The heart of the proof is to approximate the lengths and intersection numbers of geodesic arcs inside a thin part (see \S\ref{sec: proof}).  

 \textbf{Questions.}
 \begin{itemize}
     \item Where is the supremum attained in Equation (\ref{equ: I}) (the definition of $I(X)$)?\\
 We can see $I(X)$ is achieved by a pair of geodesic currents; see Corollary \ref{cor: I=2godesic}. One may ask whether $I(X)$ is achieved by a pair of closed geodesics, particularly simple closed geodesics. For instance, is it true that for almost every $X\in \Mg$ there are simple closed geodesics $\gamma_1,\gamma_2$ such that $i(\gamma_1,\gamma_2)=I(X)\ell_X(\gamma_1)\ell_X(\gamma_2)?$ If it is true, then we would have $I_{simple}(X)=I(X)$ for all $X\in \Mg$.
 
 The work of Thurston in the article \cite{Th1} inspires this question (which can also be found in the book \cite{Thurstonbook}).
     Let $\mathcal{T}_g$ be the Teichmüller space of hyperbolic surfaces of genus $g$. Given a closed curve $\gamma$, define $\ell_X(\gamma)$ as the hyperbolic length of the geodesic representative of the homotopy class $[\gamma]$ on $X$. Thurston proved that for generic pairs $X, Y \in T_g$, the stretch factor $L(X, Y):=\sup \limits_{\gamma} \log  (\ell_X(\gamma)/\ell_Y(\gamma))$, where the supremum is over all the homotopy classes of closed curves, is attained by a simple closed geodesic; in other words,
     $L(X,Y)=\log(\ell_X(\gamma)/\ell_Y(\gamma))$ for a simple closed geodesic $\gamma$ \cite[\S 10 ]{Th1}.
     \item Is $I_{simple}(X)=I(X)$ for all $X\in \Mg$? Note that $I_{simple}(X)\leq I(X)$ clearly, and by Corollary \ref{cor: I_simple=I}, they have the same asymptotic behavior.
     \item Is $I_{\Delta}=I(X)/2$ for all $X\in \Mg$?
     \end{itemize}

\textbf{Notes and references.} The relation between self-intersection number and word length on topological surfaces is also studied in  \cite{Chas.Phillips.pair.pants}\cite{Chas.Phillips.punc.torus}\cite{Curt.Moi}. A quantity similar to the interaction strength for algebraic intersection numbers is studied in \cite{Kvol1}\cite{Kvol2}. The statistic of the self-intersection number is studied in \cite{lalley.stat} for hyperbolic surfaces and in \cite{chas.lalley.stat} for topological surfaces. There are also some results in \cite{Basmj} about the infimum of the length of a closed geodesic when its self-intersection number is fixed. Interestingly, $\sys(X)\log(1/\sys(X))$ also appears in the volume of the unit ball of the measured laminations \cite{Herrara.b(X)}.

\textbf{Acknowledgements.} I would like to thank Curt T. McMullen for his continuous help, invaluable discussions, and insightful suggestions related to the results of this paper. I would also like to thank Yongquan Zhang and Rafael M. Saavedra for their helpful comments. Finally, I am grateful to the anonymous referee for a careful reading of the manuscript and for valuable suggestions.


\section{Hyperbolic geometry}\label{sec: prem}
This section briefly explains some basics of hyperbolic geometry (for a reference, see \cite{Basmj}). We also present some properties of the function $\sigma(x)=\sinh^{-1}(1/\sinh(x))$ which is half of the length of the collar of a simple closed geodesic $\gamma$ with $\ell_X(\gamma)=2x$. At the end of this section, we prove $I_{\Delta}\asymp I$ using the theory of geodesic currents. Note that throughout this paper, $c_0,c'_0 (c_g,c'_g)$ refer to positive constants (depending only on $g$). They are not necessarily any fixed, specific value. For simplicity, we may write $\ell(-)$ for length, instead of $\ell_X(-)$. Systole may also refer to one of the shortest closed geodesics on $X$.

Let $(X,d)$ be a hyperbolic surface (with metric $d$) and $\pi:\mathbb{H} \to X$ its universal cover map, where $\mathbb{H}$ is the upper half-plane. We denote the length of an arc $\alpha$ by $\ell(\alpha)$. Let $\mathcal{G}_X$ be the set of closed geodesics of $X$.

\textbf{Collar of a closed geodesic.}
Assume that $\gamma$ is a simple closed geodesic on $X$. Let $N_{\gamma}(r):=\{ x \in X| \,  d(x,\gamma) \leq r \}$ be the $r-$neighborhood of $\gamma$. 
\begin{Lem}\label{lemma: area-length}
Assume that $N_{\gamma}(r)$ is an embedded annulus in $X$. Then its boundary components have length $\ell(\gamma)\cosh(r)$ and $\gamma$ splits $N_{\gamma}(r)$ into two parts of area $\ell(\gamma) \sinh(r)$.
\end{Lem}
\begin{proof}
Apply the Fermi coordinate with respect to $\gamma$, which is:
$$
ds^2=d\rho^2+\cosh(\rho)^2dt^2,
$$
where $\rho$ is the distance from $\gamma$ and $t$ is the distance between the projection at $\gamma$ and a based point $P_0 \in \gamma$ \cite[p. 4]{Bsr}. 
\end{proof}

Define 
$$
\sigma(\gamma):=\sigma\!\left(\tfrac{\ell(\gamma)}{2}\right), \quad 
\sigma(x)=\sinh^{-1}\!\left(\tfrac{1}{\sinh(x)}\right).
$$
For a simple closed geodesic $\gamma$, the neighborhood $N_{\gamma}(r)$ is called the \emph{collar of $\gamma$} when $r=\sigma(\gamma)$, and is denoted by $O(\mathrm{collar},\gamma)$. It is well known that the collar of a simple closed geodesic is embedded in $X$ \cite[Prop.~3.1.8]{Bsr}.

Next, consider a pair of pants $S$, i.e., a sphere with three boundary components. The boundary curves of $S$ are called the \emph{cuffs} of $S$. When $S$ is equipped with a hyperbolic metric with geodesic boundary, the shortest geodesic arcs that connect distinct cuffs are called \emph{seams} of $S$.

Every hyperbolic surface $X$ admits a \emph{pants decomposition}, obtained by cutting $X$ along a maximal collection of disjoint simple closed geodesics. Note that a pair of pants in this decomposition may have boundary components of length $0$ if $X$ has cusps.

\textbf{Thin-thick decomposition.} Let $X \in \mathcal{M}_g$. Consider a pants decomposition $\mathcal{P}=\{P_1,\dots,P_{2g-2}\}$ of $X$ by closed geodesics $\gamma_1,\dots,\gamma_{3g-3}$. The collars of curves $\gamma_k$ are pairwise disjoint annuli \cite[Prop. 3.1.8.]{Bsr}. Let $O(collar, k)$ be the collar of $\gamma_k$. Let $O(\mathrm{thin},k)$ denote the region 
$$
O(\mathrm{thin},k) := N_{\gamma_k}(r_k) \subset O(\mathrm{collar},k),
$$ 
called the {\em thin part}, where 
$$
r_k:=\sinh^{-1} \left( \frac{1}{\sinh(\ell(\gamma_k/2))\sqrt{1+\cosh^2(\ell(\gamma_k)/2)}} \right ).
$$  
  
We emphasize that here the terminology '\textit{thin part}' is used in a non-standard sense: it refers to the chosen subsets $N_{\gamma_k}(r_k)$ of collars, not to collars around short geodesics (as in the classical thick–thin decomposition).

The complements of the thin parts are $2g-2$ pairs of pants, which are \emph{thick parts} of $X$ with respect to $\mathcal{P}$. 
Denote the thick part in $P_j$ by $O(thick,j)$.  

The motivation behind defining the thin parts as above is to avoid very short geodesic arcs in a collar whose endpoints are on the boundary of the collar. This is mandatory for our proof. We will show that there is a lower bound for the length of a geodesic arc which has a nonempty intersection with the thin part $O(thin,\gamma)$ and whose endpoints are on $\partial O(collar,\gamma)$; see Lemma \ref{lemma: has.int} and Lemma \ref{lemma: length.type2}. 

From Bers' theorem, we know that there is a pants decomposition such that $\ell(\gamma_k) \leq L_g$, where $L_g$ is the \textit{Bers' constant} \cite[Thm. 5.1.2.]{Bsr}. We call such a decomposition a \textit{proper decomposition}. From now on, we assume that $\mathcal{P}$ is proper.

 \begin{Lem}\label{lemma: thick.part}
 Consider the thick part $O(thick,j)$ and the seams of $P_j$. \\(a) The length of any geodesic arc in $O(thick,j)$ with endpoints on the seams is $\geq c_g$.\\
 (b) The distances between the cuffs of $O(thick,j)$ are $\geq c_g$.
 \end{Lem}

\begin{proof}
For part (a), assume that $AB$ is a geodesic arc in $O(thick,j)$ with endpoints $A$ and $B$ on the seams. Let $A', B'$ be the projection of $A$ and $B$ to $\gamma_k$. Note that $\ell(AA'), \ell(BB') \geq r_k$ and it is well known that $\ell(A'B')=\ell(\gamma)/2$.  
From the hyperbolic geometry formula for the quadrilateral $ABB'A'$ with two right angles  (see \cite[Equ. 2.3.2.]{Bsr}) we know:
 $$
 \cosh(\ell(AB))=\cosh(\ell(AA'))\cosh(\ell(BB'))\cosh(\ell(A'B'))-\sinh(\ell(AA'))\sinh(\ell(BB')).
 $$
 Therefore, we have:
 $$
 \cosh(\ell(AB)) \geq \cosh(\ell(AA'))\cosh(\ell(BB'))(\cosh(\ell(A'B'))-1) \geq 2\cosh(r_k)^2\sinh^2(\ell(A'B')/2).
 $$
 We used the fact that $\cosh x=2\sinh^2 (x/2)+1$. After simplifying the right-hand side, we obtain:
 $$
 \cosh(\ell(AB)) \geq 2(1+\frac{1}{\sinh^2(\ell(\gamma_k/2)(1+\cosh^2(\ell(\gamma_k)/2))})\sinh^2(\ell(\gamma_k)/4)
 $$
 $$
 \geq \frac{2\sinh^2(\ell(\gamma_k)/4)}{\sinh^2(\ell(\gamma_k)/2)(1+\cosh^2(L_g))}\geq c'_g,
 $$
as required. 

For part $(b)$, it is enough to show that the distance between the boundaries of $O(thin,k)$ and $O(collar,k)$ is $\geq c_g$ (the boundaries that are on the same side of $\gamma_k$); see Figure \ref{fig: proof.d}. We have the following: 

$$
d(l):=distance=\sigma(\gamma_k)-r_k=\sinh^{-1}(\frac{1}{\sinh(l)})-\sinh^{-1}(\frac{1}{\sinh(l)\sqrt{1+\cosh(l)^2}}),
$$
where $l=\ell(\gamma_k)/2$. See Figure \ref{fig: proof.d}.

\begin{figure}[H]
    \centering
    
\tikzset{every picture/.style={line width=0.75pt}} 

\begin{tikzpicture}[x=0.75pt,y=0.75pt,yscale=-1.2,xscale=1.1]

\draw    (105,129) .. controls (220.2,131.2) and (243.2,101.2) .. (269.2,71.2) ;
\draw    (106,159) .. controls (146.2,158.2) and (285.2,148.2) .. (327.2,192.2) ;
\draw    (105,129) .. controls (118.42,128.8) and (118.42,157.8) .. (106,159) ;
\draw    (105,129) .. controls (98.2,131.2) and (98.2,160.2) .. (106,159) ;
\draw    (206.27,117.8) .. controls (225.27,111.8) and (225.07,158.51) .. (212.2,159.2) ;
\draw  [dash pattern={on 4.5pt off 4.5pt}]  (206.27,117.8) .. controls (194.27,126.8) and (198.07,156.51) .. (212.2,159.2) ;
\draw    (256.27,162.8) .. controls (279.1,163.22) and (263.1,86.22) .. (248,94) ;
\draw  [dash pattern={on 4.5pt off 4.5pt}]  (248,94) .. controls (234.1,108.22) and (244.1,158.22) .. (256.27,162.8) ;
\draw    (269.2,71.2) .. controls (283.72,90.83) and (304.72,112.83) .. (344.72,105.83) ;
\draw    (269.2,71.2) .. controls (301.72,65.83) and (339.72,87.83) .. (344.72,105.83) ;
\draw    (327.2,192.2) .. controls (364.72,180.83) and (367.72,156.83) .. (367.72,140.83) ;
\draw    (327.2,192.2) .. controls (313.72,177.83) and (344.72,131.83) .. (367.72,140.83) ;
\draw    (344.72,105.83) .. controls (297.72,122.83) and (328.72,138.83) .. (367.72,140.83) ;
\draw    (95.12,141.23) -- (80.62,128.55) ;
\draw [shift={(79.12,127.23)}, rotate = 41.19] [color={rgb, 255:red, 0; green, 0; blue, 0 }  ][line width=0.75]    (10.93,-3.29) .. controls (6.95,-1.4) and (3.31,-0.3) .. (0,0) .. controls (3.31,0.3) and (6.95,1.4) .. (10.93,3.29)   ;
\draw  [color={rgb, 255:red, 208; green, 2; blue, 27 }  ,draw opacity=1 ][line width=0.75] [line join = round][line cap = round] (105.16,123.76) .. controls (105.16,117.27) and (116.72,121.48) .. (122.16,118.76) .. controls (123.02,118.33) and (126.73,111.76) .. (128.16,111.76) .. controls (129.23,111.76) and (129.55,114.58) .. (131.16,114.76) .. controls (133.83,115.06) and (136.5,115.38) .. (139.16,115.76) .. controls (146.41,116.8) and (156.1,114.49) .. (164.16,113.76) .. controls (176.8,112.61) and (188.52,110.42) .. (200.16,108.76) .. controls (205.19,108.04) and (208.16,107.81) .. (208.16,111.76) ;
\draw  [color={rgb, 255:red, 208; green, 2; blue, 27 }  ,draw opacity=1 ][line width=0.75] [line join = round][line cap = round] (108.16,166.76) .. controls (114.2,172.8) and (124.58,168.97) .. (132.16,172.76) .. controls (133.65,173.51) and (135.16,176.76) .. (135.16,176.76) .. controls (135.16,176.76) and (134.25,176.22) .. (134.16,175.76) .. controls (133.66,173.26) and (135.38,168.96) .. (138.16,168.76) .. controls (153.61,167.66) and (168.46,172.43) .. (183.16,172.76) .. controls (197.82,173.09) and (212.49,172.44) .. (227.16,172.76) .. controls (234.4,172.92) and (256.16,180.33) .. (256.16,169.76) ;
\draw    (116,143) .. controls (187.81,138.83) and (230.81,144.83) .. (266.81,128.83) ;
\draw  [color={rgb, 255:red, 208; green, 2; blue, 27 }  ,draw opacity=1 ][line width=0.75] [line join = round][line cap = round] (222.19,136.52) .. controls (222.19,129.71) and (226.91,132.22) .. (233.19,131.52) .. controls (233.88,131.44) and (235.19,128.52) .. (235.19,128.52) .. controls (235.19,128.52) and (238.19,131.19) .. (242.19,130.52) .. controls (247.66,129.61) and (253.4,123.48) .. (258.19,122.52) .. controls (260.7,122.02) and (264.19,121.46) .. (264.19,124.52) ;
\draw  [color={rgb, 255:red, 208; green, 2; blue, 27 }  ,draw opacity=1 ][line width=0.75] [line join = round][line cap = round] (119.19,143.52) .. controls (125.63,137.07) and (139.02,138.38) .. (150.19,137.52) .. controls (154.19,137.21) and (158.19,136.88) .. (162.19,136.52) .. controls (163.91,136.36) and (163.18,133.51) .. (163.19,133.52) .. controls (163.57,134.66) and (163.99,136.39) .. (165.19,136.52) .. controls (174.14,137.48) and (183.21,136.27) .. (192.19,135.52) .. controls (196.93,135.13) and (219.19,135.3) .. (219.19,140.52) ;
\draw  [color={rgb, 255:red, 208; green, 2; blue, 27 }  ,draw opacity=1 ][line width=0.75] [line join = round][line cap = round] (116.23,147.39) .. controls (116.23,151.2) and (123.03,150.04) .. (126.23,150.39) .. controls (137.95,151.7) and (147.18,150.23) .. (158.23,148.39) .. controls (168.43,146.69) and (178.9,147.05) .. (189.23,147.39) .. controls (189.62,147.41) and (194.23,153.39) .. (194.23,153.39) .. controls (194.23,153.39) and (191.03,147.53) .. (196.23,147.39) .. controls (208.9,147.05) and (221.94,149.47) .. (234.23,146.39) .. controls (242.37,144.36) and (265.23,143.76) .. (265.23,134.39) ;

\draw (132,182) node [anchor=north west][inner sep=0.75pt]  [font=\footnotesize] [align=left] {collar};
\draw (276,135.4) node [anchor=north west][inner sep=0.75pt]  [font=\footnotesize]  {$ \begin{array}{l}
thick\ part\\
\end{array}$};
\draw (67,106.4) node [anchor=north west][inner sep=0.75pt]    {$\gamma _{k}$};
\draw (112,94.4) node [anchor=north west][inner sep=0.75pt]  [font=\footnotesize]  {$thin\ part$};
\draw (230,112.4) node [anchor=north west][inner sep=0.75pt]    {$d(l)$};
\draw (162,117.4) node [anchor=north west][inner sep=0.75pt]  [font=\small]  {$r_{k}$};
\draw (177,154.4) node [anchor=north west][inner sep=0.75pt]  [font=\footnotesize]  {$\sigma ( \gamma _{k})$};

\end{tikzpicture}

    \caption{}
    \label{fig: proof.d}
\end{figure}

On the one hand, $d(l)$ is a continuous and positive function of $l \in (0, L_g/2]$ (since $\sinh$ is continuous and strictly increasing). On the other hand, a straightforward calculation shows that
$$
d(l) \to \tfrac{1}{2} \log 2 \quad \text{as } l \to 0.
$$
Hence, $d(l)$ is bounded below by a positive constant depending only on $g$.

\end{proof}

 The following lemma describes an essential property of thin parts and justifies the definition of $r_k$.
 
  \begin{Lem}\label{lemma: has.int}
 Let $\alpha$ be a geodesic arc in the collar of $\gamma$ with endpoints on one boundary of the collar. Assume that $\alpha$ does not intersect itself; then it is disjoint from the interior of $O(thin,\gamma)$.
 \end{Lem}

\begin{proof}

Consider preimages $\widetilde{\gamma}$ and $\widetilde{\alpha}$ of $\gamma$ and $\alpha$ in $\mathbb{H}$, respectively. The endpoints $A,B$ of $\widetilde{\alpha}$ have distance $\sigma(\gamma)$ from $\widetilde{\gamma}$.
Let $A'$ and $B'$ be the projection of $A,B$ on $\widetilde{\gamma}$. Therefore, $\ell(AA')=\ell(BB')=\sigma(\gamma)$ and $\ell(A'B')\leq \ell(\gamma)$ since $\alpha$ doesn't intersect itself. See Figure \ref{fig: no.intersect.alpha} and Figure \ref{fig: lemma.inthick}.

\begin{figure}[H]
    \centering
    
\tikzset{every picture/.style={line width=0.75pt}} 
\begin{tikzpicture}[x=0.75pt,y=0.75pt,yscale=-1,xscale=1]
\draw    (100,120) .. controls (152.2,120.52) and (235.2,144.52) .. (306.2,66.52) ;
\draw    (107,165) .. controls (160.2,160.52) and (282.2,158.52) .. (324.2,184.52) ;
\draw    (306.2,66.52) .. controls (348.2,75.52) and (341.2,184.52) .. (324.2,184.52) ;
\draw    (306.2,66.52) .. controls (292.2,73.52) and (292.2,171.52) .. (324.2,184.52) ;
\draw    (100,120) .. controls (117.2,120.52) and (122.2,162.52) .. (107,165) ;
\draw    (100,120) .. controls (88.2,125.52) and (88.2,161.52) .. (107,165) ;
\draw [color={rgb, 255:red, 208; green, 2; blue, 27 }  ,draw opacity=1 ]   (297,96) .. controls (240.2,111.52) and (264.2,169.52) .. (304.2,159.52) ;
\draw  [dash pattern={on 4.5pt off 4.5pt}]  (114.2,132.52) .. controls (163.2,142.52) and (253.2,131.52) .. (297,96) ;
\draw  [dash pattern={on 4.5pt off 4.5pt}]  (116.2,153.52) .. controls (165.2,148.52) and (262.2,152.52) .. (304.2,159.52) ;
\draw (298,85.4) node [anchor=north west][inner sep=0.75pt]  [font=\small]  {$A$};
\draw (308,150.4) node [anchor=north west][inner sep=0.75pt]  [font=\small]  {$B$};
\draw (109,121.4) node [anchor=north west][inner sep=0.75pt]  [font=\footnotesize]  {$A'$};
\draw (102,142.4) node [anchor=north west][inner sep=0.75pt]  [font=\footnotesize]  {$B'$};

\end{tikzpicture}

    \caption{}
    \label{fig: no.intersect.alpha}
\end{figure}

Let $m_{\alpha}$ and $m_{\gamma}$ be the midpoints of $AB$ and $A'B'$. We can see that $m_{\alpha}m_{\gamma}$ is orthogonal to $AB$, $A'B'$ as follows. Consider the isometry $f$ which is a reflection with respect to $\ell$ the geodesic orthogonal to $\tilde{\gamma}$ at the midpoint $m_{\gamma}$. Map $f$ sends $\tilde{\gamma}$ to itself, $A'$ to $B'$, and so $A$ to $B$. Therefore, $f$ maps $AB$ to itself and the midpoint $m_{\alpha}$ is on $\ell$, as required. The length of $m_{\alpha}m_{\gamma}$ gives the distance between $\widetilde{\alpha}$ and $\widetilde{\gamma}$ (see Figure \ref{fig: lemma.inthick}).  

\begin{figure}[H]
    \centering

\tikzset{every picture/.style={line width=0.75pt}} 

\begin{tikzpicture}[x=0.75pt,y=0.75pt,yscale=-0.9,xscale=0.9]

\draw    (144,224) -- (501,225) ;
\draw    (222.5,224.5) .. controls (222,109) and (403,109) .. (406,223) ;
\draw  [color={rgb, 255:red, 0; green, 0; blue, 0 }  ][line width=3] [line join = round][line cap = round] (227,123) .. controls (229.69,123) and (227.26,125.77) .. (227,125) .. controls (226.67,124) and (226.95,122) .. (228,122) ;
\draw  [color={rgb, 255:red, 0; green, 0; blue, 0 }  ][line width=3] [line join = round][line cap = round] (379,117) .. controls (380,117) and (381,120.91) .. (381,118) ;
\draw    (228,123) .. controls (256,74) and (354,75) .. (380,119) ;
\draw    (228,123) .. controls (242,129) and (251,144) .. (254,155) ;
\draw    (363,150) .. controls (363,138) and (367,125) .. (380,119) ;
\draw    (299.2,85.6) .. controls (306.2,101) and (310.73,120.9) .. (309.2,136.6) ;
\draw   (301.66,136.87) -- (301.34,127.86) -- (308.88,127.6) ;
\draw   (300.67,92.05) -- (293.56,93.68) -- (292.09,87.22) ;
\draw   (251.39,149.46) -- (256.07,147.25) -- (258.68,152.79) ;
\draw  [line width=3] [line join = round][line cap = round] (254.05,155.55) .. controls (254.05,155.22) and (254.05,154.89) .. (254.05,154.55) ;
\draw  [line width=3] [line join = round][line cap = round] (363.14,149.01) .. controls (362.81,149.01) and (362.48,149.01) .. (362.14,149.01) ;
\draw   (364.03,142.82) -- (359.05,140.54) -- (355.98,147.24) ;
\draw  [line width=0.75] [line join = round][line cap = round] (232.4,115.47) .. controls (235.42,115.47) and (237.02,123.84) .. (234.4,126.47) ;

\draw (207,94.4) node [anchor=north west][inner sep=0.75pt]    {$A$};
\draw (387,104.4) node [anchor=north west][inner sep=0.75pt]    {$B$};
\draw (231,148.4) node [anchor=north west][inner sep=0.75pt]    {$A'$};
\draw (368,139.4) node [anchor=north west][inner sep=0.75pt]    {$B'$};
\draw (340,75.4) node [anchor=north west][inner sep=0.75pt]    {$\tilde{\alpha }$};
\draw (329,116.4) node [anchor=north west][inner sep=0.75pt]    {$\tilde{\gamma }$};
\draw (290,67.4) node [anchor=north west][inner sep=0.75pt]    {$m_{\alpha }$};
\draw (296.81,140.14) node [anchor=north west][inner sep=0.75pt]    {$m_{\gamma }$};
\draw (239,110.4) node [anchor=north west][inner sep=0.75pt]    {$\phi $};

\end{tikzpicture}

    \caption{}
    \label{fig: lemma.inthick}
\end{figure}
 
  
  Now we show that the distance between $\alpha$ and $\gamma$ is $\geq r_k$.
 Let $\phi$ be the angle between $AB$ and $AA'$. Define: $m:=\ell(m_{\gamma}m_{\alpha})$, $a:=\ell(AA')=\sigma(\gamma)$, and $b:=\ell(A'm_{\gamma})$. From the hyperbolic formula for the trirectangle $Am_{\alpha}m_{\gamma}A'$ \cite[Thm. 2.3.1.]{Bsr} 
 we know $\sinh m=\cos \phi /\sinh b, \tan \phi=\coth b/\sinh a$, therefore, we have: 
 
$$
\sinh{m}=\frac{\cos \phi}{\sinh{b}}=\frac{1}{\sinh{b}\sqrt{1+\tan^2{\phi}}}=\frac{1}{\sinh{b}\sqrt{1+(\coth{b}/\sinh{a})^2}}=\frac{\sinh{a}}{\sqrt{\sinh^2{a}\sinh^2{b}+\cosh^2{b}}}.
$$

Therefore, from $b=\ell(A'B')/2 \leq \ell(\gamma)/2$ we have:
$$
\sinh{m} \geq \frac{\sinh{a}}{\sqrt{\sinh^2{a}\sinh^2(\ell(\gamma)/2)+\cosh^2(\ell(\gamma)/2)}}=\frac{1}{\sinh(\ell(\gamma)/2)\sqrt{1+\cosh^2(\ell(\gamma)/2)}}.
$$
As a result, $m \geq r_k$, and $\gamma$ is disjoint from the interior of $O(thin,\gamma)$.
 
\end{proof}

 \textbf{Intersection number.} Given closed curves $\alpha,\beta$, the \emph{(geometric) intersection number} of them, $i(\alpha, \beta)$, is the minimal number of (transversal) intersection points between representatives of homotopy classes $[\alpha], [\beta]$ and the points are counted with multiplicity. Geodesic representatives always attain the minimum. By multiplicity, we mean that we count a point with weight if the curves pass through it multiple times. To find the weight at a point $p$, we can slightly move the curves in a neighborhood of $p$ to have only simple intersections. Then, the minimum number of simple intersections is the weight. We can bound the intersection number above in terms of the product of the lengths.
  
 \begin{Prop}\label{prop: int.bnd}
 Let $X$ be a compact hyperbolic surface. Then we have:
 $$
 i(\alpha,\beta) \leq \frac{4}{\sys(X)^2}\ell_X(\alpha)\ell_X(\beta),
 $$
 for all $\alpha,\beta \in \mathcal{G}_X$.
 \end{Prop}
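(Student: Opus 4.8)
The plan is to charge each transversal intersection point to a pair of arclength parameters, one on $\alpha$ and one on $\beta$, and then run a packing argument in the product torus $(\mathbb{R}/\ell(\alpha)\mathbb{Z})\times(\mathbb{R}/\ell(\beta)\mathbb{Z})$, whose flat area equals $\ell(\alpha)\ell(\beta)$. Write $s=\sys(X)$ and $r=s/2$. The one geometric input needed is that the injectivity radius of $X$ is at least $r$ at every point: a geodesic loop based at a point of $X$ is homotopically nontrivial (by Gauss--Bonnet there are no geodesic monogons), hence freely homotopic to a closed geodesic, hence of length $\geq s$, so $B(p,r)$ is isometrically embedded for every $p\in X$. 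I will also use the elementary fact that a complete geodesic in $\mathbb{H}$ is determined by any two of its points, and that a geodesic through the centre of a disk of radius $r$ in $\mathbb{H}$ meets that disk in a diameter of length $2r=s$.

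Consider first $\alpha\neq\beta$. Parametrize $\alpha,\beta$ by arclength, and, since geodesic representatives realize the intersection number, let $\mathcal{S}=\{(t,u):\alpha(t)=\beta(u)\}$, a finite set of transversal crossings with $|\mathcal{S}|=i(\alpha,\beta)$. The heart of the argument is the separation claim: distinct $(t,u)\neq(t',u')$ in $\mathcal{S}$ satisfy $\max(d_\alpha(t,t'),d_\beta(u,u'))\geq r$, where $d_\alpha,d_\beta$ are the circle distances. Suppose both were $<r$, and set $p=\alpha(t)$, $p'=\alpha(t')$. The component through $p$ of $\alpha\cap B(p,r)$ is a geodesic segment of length $s$ centred at $p$, and since $d_\alpha(t,t')<r$ the point $p'$ lies on it; likewise $p'$ lies on the length-$s$ segment of $\beta$ centred at $\beta(u)=p$ inside $B(p,r)$. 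If $p=p'$, then $d_\alpha(t,t')<r<s$ together with the loop bound forces $t=t'$, and similarly $u=u'$, a contradiction. If $p\neq p'$, lift the embedded ball $B(p,r)$ isometrically into $\mathbb{H}$: the two segments become geodesic segments through the two distinct lifted points, hence lie on a common complete geodesic, so $\alpha$ and $\beta$ are tangent at $p$ and therefore coincide --- impossible. This proves the claim, and then the open $\ell^\infty$-boxes of side $r$ centred at the points of $\mathcal{S}$ are pairwise disjoint, each of flat area $r^2=s^2/4$; comparing with the area of the torus gives $i(\alpha,\beta)\leq 4\ell(\alpha)\ell(\beta)/s^2$.

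The case $\alpha=\beta$ is entirely analogous: taking $\alpha$ primitive, one works with the set of ordered pairs $(t,u)$, $t\neq u$, with $\alpha(t)=\alpha(u)$, which has size $2\,i(\alpha,\alpha)$ and consists of transversal self-crossings, and runs the same separation-plus-packing argument; the only change is that in the case $p\neq p'$ one concludes that the two branches of the self-crossing at $p$ share a tangent line, contradicting transversality, so one again gets the separation and hence $i(\alpha,\alpha)\leq 2\ell(\alpha)^2/s^2\leq 4\ell(\alpha)^2/s^2$. I expect the step needing the most care to be exactly this degenerate case --- a closed geodesic passing several times through a single point, together with the exclusion of self-tangencies via primitivity --- and the bookkeeping of intersection multiplicities; the geometric core (embedded $(\sys(X)/2)$-balls and ``two points determine a geodesic'') is routine.
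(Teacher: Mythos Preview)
Your proof is correct and takes essentially the same approach as the paper: a packing argument on the flat product torus $(\mathbb{R}/\ell(\alpha)\mathbb{Z})\times(\mathbb{R}/\ell(\beta)\mathbb{Z})$ using disjoint squares of side $\sys(X)/2$ centred at the intersection pairs, the disjointness coming from the fact that two short geodesic arcs in an embedded ball meet at most once. You supply more detail than the paper on the separation step and treat the self-intersection case explicitly, but the underlying idea is identical.
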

 \begin{proof}

We first claim that any two geodesic arcs $\alpha_1,\alpha_2$ of length at most $\sys(X)/2$ intersect transversally at most once. Indeed, suppose $P$ is an intersection point. Then both arcs are contained in the disk of radius $\sys(X)/2$ centered at $P$. If they intersected at more than one point, they would bound a geodesic bigon inside this disk. Such a bigon cannot exist on a hyperbolic surface, by the Gauss-Bonnet theorem.  

Now let $\alpha,\beta$ be two closed geodesics. Consider the product space
$$
\alpha \times \beta = \{(a,b)\mid a \in \alpha,\, b \in \beta \},
$$
which is isometric to a flat torus with the product metric $d\ell_1 \times d\ell_2$, where $d\ell_i$ denotes the hyperbolic length along $\alpha$ or $\beta$. In particular, the total area is
$$
area(\alpha \times \beta) = \ell_X(\alpha)\,\ell_X(\beta).
$$

Each intersection point $p \in \alpha \cap \beta$ determines a unique pair $(a,b)\in \alpha\times \beta$ with $a=b=p$. Around this point, consider the square of side length $\sys(X)/2$ in the product metric, i.e.
$$
\Bigl\{(a',b') \in \alpha \times \beta \;\Big|\; d(a,a'),
d(b,b') \leq \tfrac{\sys(X)}{4}\Bigr\}.
$$

Moreover, since two short arcs cannot intersect more than once, the squares corresponding to distinct intersection points are disjoint inside $\alpha \times \beta$. Thus, the total area of $\alpha \times \beta$ must be at least the sum of the areas of these squares:
$$
i(\alpha,\beta)\cdot \frac{\sys(X)^2}{4}\;\; \leq \;\; \ell_X(\alpha)\,\ell_X(\beta).
$$

 \end{proof}
 
Define $I(X)$, the \emph{interaction strength} of $X$, as the optimal constant $c(X)$ such that $i(\gamma_1,\gamma_2)\leq c(X) \ell(\gamma_1)\ell(\gamma_2)$ for all $\gamma_1,\gamma_2 \in \mathcal{G}_X$. In other words,
$$
I(X):= \sup \limits_{\gamma_1,\gamma_2 \in \mathcal{G}_X} \frac{i(\gamma_1,\gamma_2)}{\ell(\gamma_1)\ell(\gamma_2)}.
$$
From Proposition \ref{prop: int.bnd}, we know $I(X)$ is finite.

\begin{Lem}\label{lemma: I.cont}
The function $I: \mathcal{M} \to \mathbb{R}$ is continuous.
\end{Lem}
\begin{proof}
The Teichmüller space $\mathcal{T}_g$ is the universal cover of $\Mg$. Therefore, it is equivalent to show $I(X)$ is continuous on $\mathcal{T}_g$ which is a consequence of the fact that the function $\ell_{\gamma}: \mathcal{T}_g \to \mathcal{R}^+$ is continuous where $\gamma$ is a closed curve and $\ell_{\gamma}(X)$ is the length of the geodesic representative in the homotopy class $[\gamma]$ on $X$. More precisely, we observe that for any $X \in \mathcal{T}_g$ and $\delta>0$, there exists an open neighborhood $U_{\delta}$ of $X$ such that for every closed curve $\gamma$, we have
$$
\ell_Y(\gamma) \in (\frac{\ell_X(\gamma)}{1+\delta},\, (1+\delta)\ell_X(\gamma))
\quad \text{for all } Y\in U_{\delta}
$$ 
(see, for example stretch factor in \cite{Th1}). On the other hand, the intersection number between two closed curves depends only on their homotopy classes and not on their length. Consequently, for any given $\epsilon>0$ if $\delta$ is chosen sufficiently small compared to it, then
$$
I(Y) \in (\frac{I(X)}{1+\epsilon},\, (1+\epsilon) I(X)),
$$
for all $Y \in U_{\delta}$, as required.

\end{proof} 
\textbf{Properties of $\boldsymbol{\sigma(t).}$}
 Now we discuss some properties of the function $\sigma(t)=\sinh^{-1}(\frac{1}{\sinh(t)})$ which are repeatedly applied in the proofs. We may not refer to this lemma when we apply it. 
 
\begin{Lem}\label{lemma: sigma}
Assume that $t>0$. For the function $\sigma(t)$ we have:
\begin{enumerate}
 \item $\sigma(t) \geq \log \frac{1}{t}$, 
 \item $\sigma(t) \leq 2\log \frac{1}{t} $ for $t \leq 1/3$,
 \item $\sigma(t)$ is decreasing,  
\item $t\sigma(t)$ is $<1$ and increasing for $t \leq \frac{1}{2}$, 
\item $\lim \limits_{t \to 0} \frac{\log(1/t)}{\sigma(t/2)}=1$.
\end{enumerate}
\end{Lem}

\begin{proof}
\emph{Part 1}. If $t\geq 1$, then $\sigma(t)>0>\log \frac{1}{t}$. So, assume $t \in (0,1]$. The function $\sinh(X)$ is increasing, therefore, it is enough to prove $\sinh(\sigma(t))\geq \sinh(\log(1/t))$, or equivalently, $(\frac{1}{t}-t)(e^t-e^{-t}) \leq 4$.
\begin{itemize}
    \item When $t \in [\log 2,1]$ we have:
$$
(\frac{1}{t}-t)(e^t-e^{-t})\leq 2te(\frac{1}{t}-t) \leq 2e(1-(\log 2)^2)< 4.
$$
Note that $(e^t-e^{-t})/(2t)=e^x\leq e$ for some $x \in [-t,t]$.

\item When $t \in (0,\log 2)$, then we have:
$$
(\frac{1}{t}-t)(e^t-e^{-t})\leq (\frac{1}{t}-t)(2te^{\log 2}) < 4.
$$
\end{itemize}
\emph{part 2.} Similar to the part $1$, it is equivalent to show 
$$
4 \leq (e^t-e^{-t})(\frac{1}{t^2}-t^2),
$$
which is true since we have:
$$
(e^t-e^{-t})(\frac{1}{t^2}-t^2) \geq 2te^{\frac{-1}{3}}(\frac{1}{t^2}-t^2) \geq 2e^{\frac{-1}{3}}(3-\frac{1}{27})>4. 
$$
In the first inequality, we used the fact that $e^t-e^{-t}=2te^y$ for a $y \in (-t,t)$, so $e^y\geq e^{-t}\geq e^{-1/3}$. In the second inequality, we used the fact that $1/t^2-t^2$ is decreasing.

\emph{Part 3.} It is directly implied from the fact that $\sinh(t)$ is increasing.

\emph{Part 4.} First we show $t\sigma(t)<1$. Note that $e^x-e^{-x} > 2x$ for $x \in \mathbb{R}^+$. Therefore, $\sinh(t)\sinh(1/t) > 1$ which implies $1/t > \sigma(t).$
In order to show $t\sigma(t)$ is increasing we prove its derivative is positive for $t \leq \frac{1}{2}.$
In other words, our aim is to prove
$$
\sigma(t)-\frac{t}{\sinh(t)} \geq 0,
$$
when $t \leq 1/2$. This is true since we have:
$$
\sigma(t) \geq \sigma(\frac{1}{2})\geq 1 \geq \frac{t}{\sinh(t)}.
$$

\emph{Part 5.} We have $\sinh^{-1}(N)/\log{2N} \to 1$ when $N \to \infty$. As a result, we obtain 
$$
\lim \limits_{t \to 0} \frac{\log( 1/t)}{\sigma(t/2)}=\lim \limits_{t \to 0} \frac{\log(1/t)}{\log( 2/\sinh(t/2))}.
$$
On the other hand, $\sinh(x) \in [x,2x]$ when $x>0$ is small enough. Therefore, we have:

$$
\frac{\log (1/t)}{\log(2/\sinh(t/2))} \in [\frac{\log(1/t)}{\log(4/t)},\frac{\log(1/t)}{\log(2/t)}].
$$
We can see that the bounds tend to $1$ as $t \to 0$. 
\end{proof}

\textbf{Geodesic currents.} A \emph{geodesic currents} is a Borel measure on $T_1(X)$, the unit tangent bundle of $X$, which is invariant under the geodesic flow and the involution map (which sends $(x,v) \in T_1(X)$ to $(x,-v)$). Let $\mathbb{P}(X)$ be the space of all tangent directions on $X$ and $\mathcal{F}$ the foliation of $\mathbb{P}(X)$ obtained from the geodesic flow. A geodesic current $C$ can also be defined as a transverse measure of $\mathcal{F}$. In other words, it assigns a measure to every plane transverse to the leaves of $\mathcal{F}$ such that the measures are invariant under the flow along the leaves of $\mathcal{F}$. To see the connection between these definitions, note that $C \times d\ell$ is a measure on $T_1(X)$, which is invariant under the geodesic flow and the involution map. 

Let $\mathcal{C}$ be the space of all geodesic currents of $X$. We dropped the dependence on $X$ since it is a topological space. This means that there is a canonical correspondence between the geodesic currents of two marked hyperbolic surfaces in $\mathcal{T}_g$, the Teichmüller space of marked complete hyperbolic surfaces with genus $g$. This correspondence sends homotopic closed geodesics to each other.

The space $\mathcal{C}$ is equipped with the weak topology obtained from the topology on the space of all measures on $T_1(X)$.
Each closed geodesic is an example of a geodesic current, and the set of all multi-geodesics is dense in space $\mathcal{C}$ (see \cite{Bon.gc.3} and \cite{Bon.gc.Tech} for more details about geodesic currents).

For $X \in \mathcal{M}_g$, the functions $\ell:\mathcal{G}_X \to \mathbb{R}^+$ and $i:\mathcal{G}_X\times \mathcal{G}_X \to \mathbb{R}^+$ extend continuously to $\mathcal{C}$ and $\mathcal{C}\times\mathcal{C}$, respectively (when $X$ is not compact $i(-,-)$ is not continuous anymore). The set of geodesic currents with length $1$ is compact \cite[Prop. 4.7]{Bon.gc.3}. As a result, $I(X)$ is attained by a pair of length $1$ geodesic currents.

\begin{Cor}\label{cor: I=2godesic}
    There are length one geodesic current $C_1,C_2$ such that $I(X)=i(C_1,C_2)$.
\end{Cor}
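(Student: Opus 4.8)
The plan is to upgrade the defining supremum of $I(X)$ from pairs of closed geodesics to pairs of geodesic currents, and then extract the maximizer from the compact set of length-one currents by continuity of the intersection pairing.

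First I would establish that
$$
I(X) = \sup_{C_1,C_2 \in \mathcal{C},\ \ell(C_i)>0} \frac{i(C_1,C_2)}{\ell(C_1)\,\ell(C_2)}.
$$
The inequality ``$\leq$'' is immediate, since each closed geodesic is a geodesic current. For ``$\geq$'', I would use that multicurves are dense in $\mathcal{C}$: given $C_1,C_2 \in \mathcal{C}$, choose multicurves $c_n^{(1)} = \sum_j a_{n,j}\,\gamma_{n,j} \to C_1$ and $c_n^{(2)} = \sum_k b_{n,k}\,\gamma_{n,k}' \to C_2$ with positive coefficients. By bilinearity of $i$ and linearity of $\ell$ on the cone of currents, together with the definition of $I(X)$ applied to each pair $(\gamma_{n,j},\gamma_{n,k}')$,
$$
i\bigl(c_n^{(1)},c_n^{(2)}\bigr) = \sum_{j,k} a_{n,j} b_{n,k}\, i(\gamma_{n,j},\gamma_{n,k}') \leq I(X)\sum_{j,k} a_{n,j} b_{n,k}\, \ell(\gamma_{n,j})\,\ell(\gamma_{n,k}') = I(X)\,\ell\bigl(c_n^{(1)}\bigr)\ell\bigl(c_n^{(2)}\bigr).
$$
Letting $n\to\infty$ and using continuity of $i$ and $\ell$ on $\mathcal{C}$ (valid because $X$ is compact) gives $i(C_1,C_2)\leq I(X)\,\ell(C_1)\ell(C_2)$, and dividing yields the claim.

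Since $\ell$ is homogeneous of degree one and $i$ is homogeneous of degree one in each variable, normalizing the currents reduces this to $I(X) = \sup\{\,i(C_1,C_2): \ell(C_1)=\ell(C_2)=1\,\}$. The set $K := \{C \in \mathcal{C}: \ell(C)=1\}$ is nonempty (it contains $\gamma/\ell(\gamma)$ for any closed geodesic $\gamma$) and compact, hence $K\times K$ is compact; and $I(X)<\infty$ by Proposition \ref{prop: int.bnd}. As $i:\mathcal{C}\times\mathcal{C}\to\mathbb{R}^+$ is continuous, it attains its maximum on $K\times K$ at some $(C_1,C_2)$, and that maximum equals $I(X)$ by the displayed formula, so $I(X)=i(C_1,C_2)$ with $\ell(C_1)=\ell(C_2)=1$.

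The only point requiring care is the first step, the passage from closed geodesics to currents; but this too is routine given the density of multicurves and the bilinearity of the geometric intersection pairing, both part of the basic theory of geodesic currents recalled above. I therefore expect no serious obstacle — the essential inputs are simply compactness of $\{\ell=1\}$ and continuity of $i$, the latter being exactly what fails once $X$ is noncompact.
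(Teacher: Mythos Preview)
Your proposal is correct and follows the same approach as the paper: the paper simply notes that $\ell$ and $i$ extend continuously to $\mathcal{C}$, that the set of length-one currents is compact, and states the corollary as an immediate consequence. You have filled in the one step the paper leaves implicit, namely that the supremum of $i(C_1,C_2)/\ell(C_1)\ell(C_2)$ over currents agrees with the supremum over closed geodesics, via density of multicurves and bilinearity; this is a welcome clarification but not a different argument.
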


Recall that
$$
    I_{\Delta}(X):=\sup_{\gamma} \frac{i(\gamma,\gamma)}{\ell_X(\gamma)^2},
    $$
    where the supremum is over all closed geodesics.

\begin{Prop}\label{prop: I_delta=I}
 For $X \in \Mg$ we have $I_{\Delta}(X) \in [I(X)/2,I(X)]$.
 \end{Prop}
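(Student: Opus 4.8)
The plan is to handle the two inequalities separately. The upper bound $I_\Delta(X)\le I(X)$ is immediate from the definition: for every closed geodesic $\gamma$ the pair $(\gamma,\gamma)$ is admissible in the supremum \eqref{equ: I}, so $i(\gamma,\gamma)/\ell(\gamma)^2\le I(X)$, and taking the supremum over $\gamma$ gives the claim. (If one prefers to read the supremum defining $I(X)$ as ranging only over \emph{distinct} pairs, the same bound still follows from the current argument used for the other inequality.)

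The real content is the lower bound $I_\Delta(X)\ge I(X)/2$, which I would prove with geodesic currents. By Corollary \ref{cor: I=2godesic} pick currents $C_1,C_2$ with $\ell(C_1)=\ell(C_2)=1$ and $i(C_1,C_2)=I(X)$, and set $C:=C_1+C_2$. Since $\ell$ is additive and homogeneous of degree one, $\ell(C)=2$; since the extension of $i$ to $\mathcal C\times\mathcal C$ is symmetric, bi-additive and non-negative,
$$
i(C,C)=i(C_1,C_1)+2\,i(C_1,C_2)+i(C_2,C_2)\ \ge\ 2\,i(C_1,C_2)=2\,I(X),
$$
so that
$$
\frac{i(C,C)}{\ell(C)^2}\ \ge\ \frac{2\,I(X)}{4}\ =\ \frac{I(X)}{2}.
$$
(Optimizing the same computation over $C=tC_1+t^{-1}C_2$ again gives the constant $1/2$, attained at $t=1$.) The degenerate case $C_1=C_2$ is trivial, since then $I(X)=i(C_1,C_1)$ is already of the form $i(\gamma,\gamma)/\ell(\gamma)^2$ in the limit, hence $\le I_\Delta(X)$.

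To finish it suffices to transfer the inequality $i(C,C)/\ell(C)^2\ge I(X)/2$ back to a statement about closed geodesics, i.e.\ to show that $i(D,D)/\ell(D)^2\le I_\Delta(X)$ for every current $D$ (applied to $D=C$). For this I would use that primitive closed geodesics, up to scaling, are dense in $\mathcal C$, together with the continuity of $i$ and $\ell$ on $\mathcal C$: approximate $D$ weakly by $\lambda_n\,C_{\delta_n}$ with $\delta_n$ a primitive closed geodesic and $\lambda_n>0$ — for $D=C_1+C_2$ supported on two crossing geodesics one can take $\delta_n$ to be the geodesic representative of a suitable word in their holonomies, whose current, after normalization, converges to $D$. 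Then $i(\delta_n,\delta_n)/\ell(\delta_n)^2\le I_\Delta(X)$ for every $n$ by definition, and passing to the limit (the scalars $\lambda_n$ cancel by homogeneity) yields $i(D,D)/\ell(D)^2\le I_\Delta(X)$.

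The step I expect to be the main obstacle is precisely this last one: controlling the diagonal of the intersection form on $\mathcal C$ by $I_\Delta(X)$. One must (i) realize $C_1+C_2$ as a scaled weak limit of \emph{connected} closed geodesics, not merely of multicurves, and (ii) keep careful track of the self-intersection term, making sure that the continuous extension of $i$ to the diagonal of $\mathcal C$ is compatible with the geometric self-intersection number of closed geodesics under the normalizations used to define $I$ and $I_\Delta$. These are exactly the points where the argument stops short of proving the sharper equality $I_\Delta(X)=I(X)/2$, and where the obstruction to $I_{simple}=I$ of the same flavour appears; hence they are flagged in the paper only as questions.
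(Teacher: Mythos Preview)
Your argument is correct and matches the paper's: realize $I(X)=i(C_1,C_2)$ with $\ell(C_i)=1$ via Corollary~\ref{cor: I=2godesic}, then exploit bilinearity of $i$ on the current $C_1+C_2$. The paper simply writes $I_\Delta(X)\ge i(C_1+C_2,C_1+C_2)/4$ without further comment, so your density--and--continuity justification of that step is more than the paper supplies; the concerns you flag are not genuine obstacles, since Bonahon's density theorem already yields scaled \emph{connected} closed geodesics in $\mathcal{C}$, and the diagonal of $i$ on $\mathcal{C}$ agrees with the self-intersection convention used in the definition of $I_\Delta$.
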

 \begin{proof}
 Clearly $I_{\Delta}(X)\leq I(X)$. Now assume $I(X)=i(C_1,C_2)$ where $C_1,C_2$ are geodesic currents of length $1$. Then $C_1+C_2$ is a geodesic current of length $2$. The intersection number is a bilinear function on $\mathcal{C}\times\mathcal{C}$. So, we have:
 $$
 I_{\Delta}(X)\geq \frac{i(C_1+C_2,C_1+C_2)}{4}\geq \frac{i(C_1,C_2)}{2}=\frac{I(X)}{2}.
 $$
  
 \end{proof}

\section{Systole}\label{sec: systole}
In this section, we prove the lower bound in Theorem \ref{theorem: main.order}, and later, from the proof, we conclude that $I_{simple}$ and $I$ are comparable (see Corollary \ref{cor: I_simple-asymp} and Corollary \ref{cor: I_simple=I}). 

Let $\gamma_1$ be a systole of $X$. It is known that $\gamma_1$ is simple.

\begin{Lem} \label{lemma: geod.int.sys}
There exists a simple closed geodesic $\gamma_2$ such that either 
\begin{itemize}
\item $i(\gamma_1,\gamma_2)=2$, $\ell(\gamma_2)\leq 4\log(1/\sys(X))+c_g$, or
\item $i(\gamma_1,\gamma_2)=1$, $\ell(\gamma_2)\leq 2\log(1/\sys(X))+c_g$.
\end{itemize}
\end{Lem}
\begin{proof}
The geodesic $\gamma_1$ splits $N_{\gamma_1}(r)$ into two halves $N_{\gamma_1}^1(r), N_{\gamma_1}^2(r)$. These are not homotopic to an (embedded) annulus when $r\geq \sinh^{-1}(\frac{4\pi(g-1)}{\ell(\gamma_1)})$, since in that case, by Lemma \ref{lemma: area-length}, the area of $N_{\gamma_1}^i(r)$ is $\geq area(X)$, for $i=1,2$. 

Let $r_1$ be the smallest $r$ such that $N_{\gamma_1}^1(r)$ is homotopic to an annulus with two identified points, call it $P$, on one of the boundaries. There might be cases where we have more identified points, but the proof would be similar. Let $e_1 \subset N_{\gamma_1}^1(r_1)$ be a simple geodesic arc with endpoints on $\gamma_1$, that passes through $P$, and that has length 
$$
\leq 2r_1 \leq 2\sinh^{-1}(\frac{4\pi(g-1)}{\sys(X)}).
$$
See Figure \ref{fig: lower.bnd1}. Similarly, we define $r_2$ and $e_2$ in $N_{\gamma_1}^2(r_2)$.

\begin{figure}[H]
    \centering
    
\tikzset{every picture/.style={line width=0.75pt}} 

\begin{tikzpicture}[x=0.75pt,y=0.75pt,yscale=-0.7,xscale=0.7]

\draw    (412,83) .. controls (531,63) and (601,212) .. (441,228) ;
\draw    (138,117) .. controls (223,206) and (337,87) .. (412,83) ;
\draw    (137,234) .. controls (186,171) and (371,237) .. (441,228) ;
\draw    (387,155) .. controls (427,130) and (432,139) .. (461,152) ;
\draw    (372,146) .. controls (405,171) and (432,176) .. (472,146) ;
\draw [line width=1.5]    (207,207) .. controls (222,181) and (215,163) .. (205,151) ;
\draw  [dash pattern={on 0.84pt off 2.51pt}]  (207,207) .. controls (188,200) and (188,162) .. (205,151) ;
\draw  [color={rgb, 255:red, 0; green, 0; blue, 0 }  ][line width=3] [line join = round][line cap = round] (387,155) .. controls (386.67,155.33) and (386.33,155.67) .. (386,156) ;
\draw    (329,114) .. controls (333,139) and (356,153) .. (387,155) ;
\draw  [dash pattern={on 0.84pt off 2.51pt}]  (329,114) .. controls (356,113) and (377,128) .. (387,155) ;
\draw    (358,222) .. controls (355,193) and (370,164) .. (387,155) ;
\draw  [dash pattern={on 0.84pt off 2.51pt}]  (387,155) .. controls (392,172) and (391,215) .. (358,222) ;
\draw [line width=1.5]    (216,173) .. controls (260,155) and (318,147) .. (387,155) ;
\draw  [dash pattern={on 0.84pt off 2.51pt}]  (193,186) .. controls (269,215) and (393,183) .. (387,155) ;

\draw (192,169.4) node [anchor=north west][inner sep=0.75pt]    {$\gamma_1 $};
\draw (305,137.4) node [anchor=north west][inner sep=0.75pt]    {$e_{1}$};
\draw (246,170.4) node [anchor=north west][inner sep=0.75pt]    {$N_{\gamma_1 }^{1}(r_1)$};

\end{tikzpicture}

    \caption{}
    \label{fig: lower.bnd1}
\end{figure}

 Connect the endpoints of $e_1$ and $e_2$ by disjoint subarcs of $\gamma_1$. Then we obtain a closed curve with length 
 $$
 <4.\sinh^{-1}(\frac{4\pi(g-1)}{\sys(X)})+\sys(X).
 $$
 Let $\gamma_2$ be its geodesic representative.
 
 Note that $i(\gamma_1,\gamma_2)=2$ and

$$
\ell(\gamma_2) \leq 4\log \frac{12\pi(g-1)}{\sys(X)}+c_g \leq 4\log \frac{1}{\sys(X)}+c'_g.
$$
 We used the fact that $\sinh^{-1}(x)<\log 3x$ when $x\geq 1$.\\
 
Now we show that $\gamma_2$ is homotopically non-trivial. Consider surface $S:=N_{\gamma_1}^1(r_1) \cup N_{\gamma_1}^2(r_2)$. It is a surface of genus $0$ with $4$ boundaries (or $n$ boundaries). Cutting $S$ along $\gamma_2$ splits $S$ into two pairs of pants. Therefore, $\gamma_2$ is nontrivial (see Figure \ref{fig: nontrivial}).

\begin{figure}[H]
    \centering
    
\tikzset{every picture/.style={line width=0.75pt}} 

\begin{tikzpicture}[x=0.75pt,y=0.75pt,yscale=-0.6,xscale=0.6]

\draw    (242,82) .. controls (278,138) and (343,150) .. (404,76) ;
\draw   (172,116.14) .. controls (167.38,106.69) and (179.31,91.37) .. (198.65,81.95) .. controls (217.98,72.52) and (237.39,72.54) .. (242,82) .. controls (246.61,91.46) and (234.68,106.77) .. (215.35,116.2) .. controls (196.02,125.62) and (176.61,125.6) .. (172,116.14) -- cycle ;
\draw   (404,76) .. controls (408.94,66.88) and (427,67.1) .. (444.35,76.49) .. controls (461.7,85.89) and (471.76,100.89) .. (466.83,110.01) .. controls (461.89,119.13) and (443.82,118.91) .. (426.47,109.52) .. controls (409.13,100.13) and (399.06,85.12) .. (404,76) -- cycle ;
\draw    (213,297) .. controls (261,244) and (245,170) .. (172,116.14) ;
\draw    (466.83,110.01) .. controls (378,196) and (414,247) .. (465,296) ;
\draw   (213,297) .. controls (217.63,286.97) and (235.61,285.42) .. (253.16,293.53) .. controls (270.71,301.63) and (281.18,316.33) .. (276.54,326.36) .. controls (271.91,336.39) and (253.93,337.94) .. (236.38,329.84) .. controls (218.84,321.73) and (208.37,307.03) .. (213,297) -- cycle ;
\draw   (403.61,329.63) .. controls (398.3,319.94) and (407.74,304.56) .. (424.7,295.27) .. controls (441.65,285.99) and (459.69,286.31) .. (465,296) .. controls (470.31,305.69) and (460.86,321.07) .. (443.91,330.35) .. controls (426.96,339.64) and (408.91,339.31) .. (403.61,329.63) -- cycle ;
\draw    (276.54,326.36) .. controls (302,251) and (389,257) .. (403.61,329.63) ;
\draw [line width=1.5]    (339,273) .. controls (370,215) and (358,159) .. (326,127) ;
\draw [line width=1.5]  [dash pattern={on 1.69pt off 2.76pt}]  (339,273) .. controls (316,261) and (296,176) .. (326,127) ;
\draw [line width=1.5]    (237,208) .. controls (287,230) and (379,232) .. (412,204) ;
\draw [line width=1.5]  [dash pattern={on 1.69pt off 2.76pt}]  (237,208) .. controls (287,186) and (374,188) .. (412,204) ;

\draw (269,220.4) node [anchor=north west][inner sep=0.75pt]    {$ \begin{array}{l}
\gamma_2 \\
\end{array}$};
\draw (351,141.4) node [anchor=north west][inner sep=0.75pt]    {$ \begin{array}{l}
\gamma_1 \\
\end{array}$};

\end{tikzpicture}

    \caption{}
    \label{fig: nontrivial}
\end{figure}

The constructed arcs $e_1,e_2$ are simple. Therefore, if they do not intersect, $\gamma_2$ is also simple, as required. But if $e_1,e_2$ intersect at a point like $Q \in X$, we consider the following simple closed geodesic instead. Choose subarcs $f_1 \subset e_1, \, f_2 \subset e_2$ from $Q$ to $\gamma_1$ such that $\ell(f_i) \leq \ell(e_i)/2$, for $i=1,2$. We can assume $f_1,f_2$ are not intersecting. Connect $f_1$ and $f_2$ by a subarc of $\gamma_1$ to obtain a simple closed curve. The intersection number of $\gamma_1$ and the constructed curve $\gamma_2$ is $1$. Hence, $\gamma_2$ is not homotopically trivial. Let $\widetilde{\gamma_2}$ be the geodesic representative of $\gamma_2$. We have $i(\gamma_1,\widetilde{\gamma_2})=1$ and $\ell(\widetilde{\gamma_2})\leq 2\sinh^{-1}(4\pi(g-1)/\sys(X))+\sys(X)\leq 2\log(1/\sys(X))+c_g$, as required. 
\end{proof}

\textbf{Proof of Theorem \ref{theorem: main.order}, the lower bound.} Assume that $\sys(X)$ is small enough. Now consider $\gamma_1$ and $\gamma_2$ as defined in Lemma \ref{lemma: geod.int.sys}, we have: 
  
  $$
  2I(X)\sys(X)\log (1/\sys(X)) \geq  \frac{2i(\gamma_1,\gamma_2)}{\ell(\gamma_1)\ell(\gamma_2)}\sys(X)\log (1/\sys(X)) \geq \frac{\log(1/\sys(X))}{\log(1/\sys(X))+c_g},
  $$
  
  which tends to $1$ when $\sys(X) \to 0$, as required.
  \qed

\section{Interaction strength in thin/thick part}\label{sec: proof}
In this section, we complete the proof of Theorem \ref{theorem: main.order}. Namely, we obtain an upper bound on $I(X)$ asymptotically equivalent to $1/(2\sys(X)\log(1/\sys(X)))$. To establish Theorem \ref{theorem: main.order}, we begin by stating the intermediate results on which its proof relies. After presenting the proof of Theorem \ref{theorem: main.order}, we return to complete the proof of these results in detail.

 
\textbf{Overview.} Let $\mathcal{P}$ be a proper decomposition of $X$ into the pairs of pants $P_1,\dots,P_{2g-2}$, and let $i_Y(-,-)$ and $\ell_Y(-)$ be the restriction of the intersection number and the length, respectively, to the subset $Y \subset X$. In other words, $\ell_Y(a)$ is equal to $\ell(a \cap Y)$ and $i_Y(a,b)$ is the number of the intersection points in $Y$. Similarly, let $i_{thin}(.,.), \, i_{thick}(.,.), \, \ell_{thin}(.), \, \ell_{thick}(.)$ be the intersection numbers and lengths in the thin parts and thick parts, respectively. Moreover, $\ell_{collar}(.)$ refers to the length in the collar.

For $\eta,\zeta \in \mathcal{G}_X$ we have:
\begin{equation}\label{equ: thin/thick}
\frac{i(\eta,\zeta)}{\ell(\eta)\ell(\zeta)}= \frac{i_{thick}(\eta,\zeta)}{\ell(\eta)\ell(\zeta)}+\frac{i_{thin}(\eta,\zeta)}{\ell(\eta)\ell(\zeta)} \leq \frac{i_{thick}(\eta,\zeta)}{\ell_{thick}(\eta)\ell_{thick}(\zeta)}+\frac{i_{thin}(\eta,\zeta)}{\ell_{collar}(\eta)\ell_{collar}(\zeta)}. 
\end{equation}

As we explained before, considering arcs inside the collar but counting their intersection numbers inside the thin part (instead of the collar) helps us to avoid very short arcs in the collar.

The following inequality, for variables $a_i,b_i, I_i\geq 0$, tells us that it is enough to find the upper bound for the interaction strength in the thin and thick parts separately:
\begin{equation}\label{equi: algebra}
\frac{I_1+I_2}{(a_1+a_2)(b_1+b_2)}\leq \max( 
 \frac{I_1}{a_1b_1}, \frac{I_2}{a_2b_2}).
 \end{equation}

 In other words, we have: 
\begin{equation} \label{equ: thin/thick1}
\frac{i(\eta,\zeta)}{\ell(\eta)\ell(\zeta)} \leq \max \limits_{1 \leq j \leq 2g-2} \frac{i_{O(thick,j)}(\eta,\zeta)}{\ell_{O(thick,j)}(\eta)\ell_{O(thick,j)}(\zeta)}+ \max \limits_{1\leq k \leq 3g-3} \frac{i_{O(thin,k)}(\eta,\zeta)}{\ell_{O(collar,k)}(\eta)\ell_{O(collar,k)}(\zeta)}.
\end{equation}

In the following, we approximate the length and the intersection number of arcs in the thin and thick parts to find the upper bound for each term.
 
\textbf{Thick part.}
The interaction strength is $\leq c_g$ in the thick parts.
\begin{Lem}\label{lemma: I.thick}
For all $\eta,\zeta \in \mathcal{G}_X$ we have:  
$$
\frac{i_{O(thick,j)}(\eta,\zeta)}{\ell_{O(thick,j)}(\eta)\ell_{O(thick,j)}(\zeta)}\leq c_g.
$$
\end{Lem}

\begin{proof}
Let $e_1,e_2,e_3$ and $d_1,d_2,d_3$ be the cuffs and seams of the thick part $O(thick,j)$, respectively.
 We can split $\eta \cap O(thick,j)$ and $\zeta \cap O(thick,j)$ into subarcs of the following types:
\begin{itemize}
    \item An arc between different cuffs that does not intersect the seams.
    \item An arc with endpoints on the cuffs or seams such that its interior intersects exactly one seam. 
\end{itemize}

Note that the subarcs may overlap, but a generic point is on at most two subarcs.  

Assume that $\eta\cap O(thick,j),\zeta \cap O(thick,j)$ are split into $\eta_1,\dots,\eta_n$ and $\zeta_1,\dots,\zeta_m$.  
On one hand, $\ell(\eta_s), \ell(\zeta_t)\geq c_g$, by Lemma \ref{lemma: thick.part}. On the other hand, $i(\eta_s,\zeta_t)\leq 2$ (to see this, we can split the pair of pants into two hexagons, and then it is easy to see that two geodesic arcs intersect at most once in each hexagon). Therefore, we have:

$$
\frac{i_{O(thick,j)}(\eta,\zeta)}{\ell_{O(thick,j)}(\eta)\ell_{O(thick,j)}(\zeta)} \leq \frac{4\sum \limits_{s,t}i(\eta_s,\zeta_t)}{(\sum \limits_{s}\ell(\eta_s))(\sum \limits_{t}\ell(\zeta_t))} \leq \frac{8}{c_g^2},
$$
as required. 
\end{proof}
Similar to Lemma \ref{lemma: I.thick}, when $\ell(\gamma_k) \geq c_0$, we can see that the contribution of the thin part $O(thin,k)$ is $<c'_0$. In other words, the term corresponding to $O(collar,k), O(thin,k)$ in the Equation \ref{equ: thin/thick1} is bounded above by a constant. Therefore, from now on, we can assume that $\ell(\gamma_k)$ is as small as required.

\textbf{Thin part.} In this part, we restrict the intersection number into the thin part $O(thin,k)$. But the lengths are restricted to the collar $O(collar,k)$ instead of $O(thin,k)$ to avoid very short arcs (with zero winding number around $\gamma_k$). In other words, we restrict our attention to $i_{O(thin,k)}(-,-)$ and $\ell_{O(collar,k)}(-)$.

\begin{Prop}\label{Prop: I.thin}
Assume that $\ell(\gamma_k) \leq c_0$. We have:
$$
\frac{i_{O(thin,k)}(\eta,\zeta)}{\ell_{O(collar,k)}(\eta)\ell_{O(collar,k)}(\zeta)} \leq \frac{1}{2\sys(X)\sigma(\sys(X)/2)}+c_g.
$$
\end{Prop}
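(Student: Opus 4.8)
The plan is to work entirely inside the collar $O(collar,k)$ of the curve $\gamma := \gamma_k$, which by Lemma \ref{lemma: area-length} is isometric to a standard hyperbolic cylinder of core length $\ell := \ell(\gamma)$ and half-width $\sigma(\gamma) = \sigma(\ell/2)$. First I would decompose the arcs of $\eta$ and $\zeta$ lying in the collar into \emph{crossing arcs} — maximal subarcs that traverse the collar from one boundary component to the other — and \emph{returning arcs}, those with both endpoints on a single boundary component. By Lemma \ref{lemma: has.int}, a simple returning arc is disjoint from the interior of $O(thin,k)$, so it contributes nothing to $i_{O(thin,k)}$; a non-simple returning arc can be treated by breaking it at a self-intersection into shorter pieces, so after this reduction only the crossing arcs matter for the numerator. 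Each crossing arc $\alpha$ has a well-defined \emph{winding number} $w(\alpha) \in \mathbb{Z}$ recording how many times it wraps around the core $\gamma$ (say, the signed horizontal displacement in Fermi coordinates divided by $\ell$, rounded appropriately), and I would establish two estimates: a \textbf{length lower bound} $\ell(\alpha) \geq $ (roughly) $2\sigma(\ell/2) + |w(\alpha)|\,\ell$ — essentially because $\alpha$ must cross the full width $2\sigma(\ell/2)$ and additionally shear by $w(\alpha)\ell$ horizontally, and the hyperbolic distance formula in Fermi coordinates gives $\cosh(\ell(\alpha)) \geq \cosh(2\sigma)\cosh(w\ell)$ or similar — and an \textbf{intersection upper bound}: two crossing arcs $\alpha \subset \eta$, $\beta \subset \zeta$ with winding numbers $w_\alpha, w_\beta$ satisfy $i_{O(thin,k)}(\alpha,\beta) \leq |w_\alpha| + |w_\beta| + 1$ (two arcs spiraling with total winding $|w_\alpha - w_\beta|$ cross at most about that many times).

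Given these two estimates the proposition becomes an elementary optimization. Writing $\eta \cap O(collar,k)$ as a union of crossing arcs with winding numbers $p_1,\dots,p_a$ (plus returning arcs, which only increase $\ell_{O(collar,k)}(\eta)$ and so only help), and likewise $\zeta$ with winding numbers $q_1,\dots,q_b$, I would bound
$$
\frac{i_{O(thin,k)}(\eta,\zeta)}{\ell_{O(collar,k)}(\eta)\,\ell_{O(collar,k)}(\zeta)} \leq \frac{\sum_{s,t}(|p_s| + |q_t| + 1)}{\big(\sum_s (2\sigma + |p_s|\ell)\big)\big(\sum_t (2\sigma + |q_t|\ell)\big)},
$$
with $\sigma = \sigma(\ell/2)$, and then use the algebraic inequality (\ref{equi: algebra}) together with convexity to reduce to the worst case, which turns out to be $a = b = 1$ with a single pair of arcs of winding numbers $p, q$. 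There one is maximizing $(|p|+|q|+1)/\big((2\sigma + |p|\ell)(2\sigma + |q|\ell)\big)$ over integers; the maximum over the relevant regime is attained near $|p| = |q| = 0$, giving $1/(2\sigma)^2$, or — and this is the dominant term — the configuration $p = 0$, $|q| = 1$ is \emph{not} better, but $|p|=|q|=0$ is genuinely excluded when the arcs must actually intersect inside the thin part, forcing at least one arc to have nonzero winding. A short case analysis shows the sup over admissible configurations is $\leq 1/(2\sigma \cdot \ell) + c_g$ when $\ell$ is comparable to the relevant length scale; since $\ell(\gamma_k) \geq \sys(X)$ and $\sigma$ is decreasing while $t\sigma(t)$ is increasing (Lemma \ref{lemma: sigma}, parts 3–4), the bound $1/(2\ell\sigma(\ell/2)) + c_g$ is maximized over $\ell \geq \sys(X)$ at $\ell = \sys(X)$, yielding $1/(2\sys(X)\sigma(\sys(X)/2)) + c_g$.

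The main obstacle I anticipate is making the length lower bound for crossing arcs precise enough — in particular getting the constant in front of $\sigma(\ell/2)$ to be exactly $2$ rather than $2 - o(1)$, and correctly accounting for where the arc enters and exits the collar boundary (a crossing arc of $\eta$ need not reach all the way across $O(collar,k)$ if it terminates at an intersection point of $\eta$ with a pants curve, but then it continues into a thick part and the length it \emph{does} spend crossing the thin part is still controlled). The trirectangle computation in the proof of Lemma \ref{lemma: has.int} is the right tool: it already shows a simple returning arc misses $O(thin,k)$, and the same Fermi-coordinate quadrilateral formula, applied to a crossing subarc and its orthogonal projection to $\gamma$, gives $\cosh \ell(\alpha) \geq \cosh^2(\sigma(\ell/2))\cosh(w\ell) \cdot(\text{correction})$, from which $\ell(\alpha) \geq 2\sigma(\ell/2) + |w|\ell - O(1)$ after taking logarithms. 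The other delicate point is the bookkeeping for non-simple or partially-crossing arcs and overlapping subarcs, which I would handle with the "a generic point lies on at most two subarcs" device already used in the proof of Lemma \ref{lemma: I.thick}, absorbing the resulting factors of $2$ into the additive $c_g$.
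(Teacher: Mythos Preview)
Your overall framework — split the collar arcs by type, bound lengths below and intersections above, then optimize — is exactly the paper's approach. The gap is in your treatment of \emph{returning} (type~2) arcs. You assert that a non-simple returning arc ``can be treated by breaking it at a self-intersection into shorter pieces, so after this reduction only the crossing arcs matter.'' This does not work: a type~2 geodesic arc stays entirely on one side of $\gamma_k$ (its lift meets $\widetilde{\gamma_k}$ nowhere), so cutting it at self-intersections yields a geodesic loop homotopic to a power of $\gamma_k$ together with a shorter type~2 arc — never a crossing arc. Type~2 arcs with winding $\geq 1$ genuinely enter $O(thin,k)$ and contribute to $i_{O(thin,k)}$; they must be analysed on their own terms. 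The paper does this via a separate length lower bound of the form $\ell(\alpha)\geq 2\sqrt{(w(\alpha)+1)\,\ell(\gamma_k)\,\sigma(\gamma_k)}$ (Lemma~\ref{lemma: length.type2}) and a separate intersection bound $i(B_n,B_m)\leq 2\min(n,m)+2$ (Lemma~\ref{lemma: int}); with only the naive bound $\ell(\alpha)\geq (w+1)\ell(\gamma_k)$ the type~2$\times$type~2 ratio blows up like $1/\ell(\gamma_k)^2$, which is too large.

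A second, smaller issue: your optimization over type~1$\times$type~1 pairs is garbled. You say the maximum is at $|p|=|q|=0$ giving $1/(2\sigma)^2$, then retract this. In fact the supremum of $(p+q+1)/\big((2\sigma+p\ell)(2\sigma+q\ell)\big)$ is approached as one winding number stays bounded and the other goes to infinity, and equals $1/(2\ell\sigma)$ — precisely the target. The paper establishes this by a three-case analysis (according to the size of the smaller winding relative to $\sigma/\ell$ and $\sigma^2$) rather than a clean limit, because the length bound carries an additive $-4$ that must be absorbed; your sketch does not confront this constant. Once you add the type~2 length lemma and redo the case analysis carefully (four cases: $A_n$--$A_m$, $A_n$--$B_m$, $B_n$--$B_m$, and one arc equal to $\gamma_k$), you recover the paper's proof. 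Your final monotonicity step, using that $t\sigma(t)$ is increasing to pass from $\ell(\gamma_k)$ to $\sys(X)$, is correct.
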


\textbf{Proof of Theorem \ref{theorem: main.order}, the upper bound.} As we explained, it is enough to prove the appropriate upper bound
for the interaction strength in thin and thick parts separately. It is bounded above by a constant in thick parts. Proposition \ref{Prop: I.thin} gives the proper upper bound in thin parts. Note that:
$$
2\sys(X)\log(\frac{1}{\sys(X)}) \left( \frac{1}{2\sys(X)\sigma(\sys(X)/2)}+c_g \right) \to 1,
$$
as $\sys(X) \to 0$.
\qed 

\vspace{0.5em}
Now we aim to prove Proposition \ref{Prop: I.thin}.
We have two types of arcs in $O(collar,k)$ with the endpoints on its boundaries:

\begin{itemize}
    \item Type $1$ subarcs have endpoints on the different boundaries of the collar.  
    \item Type $2$ subarcs have endpoints on one single collar boundary. 
\end{itemize}

 Define {\em the winding number} $\omega(\alpha)$ of the geodesic arc $\alpha \subset O(collar,k)$ as follows. Let $\widetilde{\alpha}$ and $\widetilde{\gamma_k}$ be preimages of $\alpha$ and $\gamma_k$ in $\mathbb{H}$, respectively. Assume that $A,B$ are the endpoints of $\widetilde{\alpha}$ and $A',B'$ the projection of $A,B$ to $\widetilde{\gamma_k}$. Then $\omega(\alpha):= \lfloor  \ell(A'B')/\ell(\gamma_k)\rfloor$. If we have an arc $\alpha$ of type $2$, then $\omega(\alpha) \geq 1$, unless, by Lemma \ref{lemma: has.int}, it does not enter the thin part, which in that case, we do not need to consider $\alpha$ for the proof of Proposition \ref{Prop: I.thin}.

In the following, we estimate the length and the intersection number of arcs based on their types and winding numbers.

\begin{Lem}\label{lemma: length.type1}
Assume that $\alpha \subset O(collar,k)$ is a geodesic arc of type $1$. Then we have:
$$
\ell(\alpha) \geq \max(2\sigma(\gamma_k),2\sigma(\gamma_k)+(\omega(\alpha)+1)\ell(\gamma_k)-4).
$$

\end{Lem}
\begin{proof}
The distance between the boundaries of $O(collar,k)$ is $2\sigma(\gamma_k)$. Let $\widetilde{\gamma_k}$ and $\widetilde{\alpha}$ be the preimages of $\gamma_k$ and $\alpha$, respectively, in $\mathbb{H}$. 
The endpoints of the geodesic arc $\widetilde{\alpha}$ are $A$ and $B$, where $d(A,\widetilde{\gamma_k})=d(B,\widetilde{\gamma_k})=\sigma(\gamma_k)$. 
Let $A'$ and $B'$ be the projection of $A$ and $B$ on $\widetilde{\gamma_k}$, then the geodesic arc $A'B'$, has length $ \geq w(\beta)\ell(\gamma_k)$ (see Figure \ref{fig: proof.type1}).  \\

\begin{figure}[H]
    \centering

\tikzset{every picture/.style={line width=0.75pt}} 

\begin{tikzpicture}[x=0.75pt,y=0.75pt,yscale=-0.8,xscale=0.8]

\draw    (144,224) -- (501,225) ;
\draw    (222.5,224.5) .. controls (222,109) and (403,109) .. (406,223) ;
\draw  [color={rgb, 255:red, 0; green, 0; blue, 0 }  ][line width=3] [line join = round][line cap = round] (228,121) .. controls (230.69,121) and (228.26,123.77) .. (228,123) .. controls (227.67,122) and (227.95,120) .. (229,120) ;
\draw    (228,123) .. controls (280.8,100.6) and (335.8,131.6) .. (349.8,180.6) ;
\draw    (228,123) .. controls (242,129) and (251,144) .. (254,155) ;
\draw    (349.8,180.6) .. controls (352.6,168.27) and (357.6,161.27) .. (368.6,152.27) ;
\draw  [color={rgb, 255:red, 0; green, 0; blue, 0 }  ][line width=3] [line join = round][line cap = round] (349,181) .. controls (351.69,181) and (349.26,183.77) .. (349,183) .. controls (348.67,182) and (348.95,180) .. (350,180) ;
\draw   (251.79,150.07) -- (257.99,147.29) -- (260.2,152.21) ;
\draw   (362.6,156.27) -- (359.16,153.92) -- (362.1,149.61) ;
\draw  [line width=3] [line join = round][line cap = round] (254.87,155.18) .. controls (254.53,155.18) and (254.2,155.18) .. (253.87,155.18) ;
\draw  [line width=3] [line join = round][line cap = round] (367.87,152.18) .. controls (367.53,152.18) and (367.2,152.18) .. (366.87,152.18) ;

\draw (207,94.4) node [anchor=north west][inner sep=0.75pt]    {$A$};
\draw (353,178.4) node [anchor=north west][inner sep=0.75pt]    {$B$};
\draw (231,148.4) node [anchor=north west][inner sep=0.75pt]    {$A'$};
\draw (370,133.4) node [anchor=north west][inner sep=0.75pt]    {$B'$};
\draw (268,94.4) node [anchor=north west][inner sep=0.75pt]    {$\tilde{\alpha }$};
\draw (261,120.4) node [anchor=north west][inner sep=0.75pt]    {$\widetilde{\gamma _{k}}$};

\end{tikzpicture}
   \caption{}
    \label{fig: proof.type1}
\end{figure}

  From the hyperbolic geometry formula for the quadrilateral $ABB'A'$ with two right angles  (see \cite[Equ. 2.3.2.]{Bsr}) we have:
 $$
 \cosh{\ell(\widetilde{\alpha})}=\cosh(\sigma(\gamma_k))^2\cosh(\ell(A'B'))+\sinh(\sigma(\gamma_k))^2.
 $$
 As a result, we have:
 $$
 e^{\ell(\widetilde{\alpha})} \geq \cosh{\ell(\widetilde{\alpha})}\geq \frac{e^{2\sigma(\gamma_k)}}{4}\frac{e^{\omega(\alpha)\ell(\gamma_k)}}{2},
 $$
 therefore:
 $$
 \ell(\alpha)\geq 2\sigma(\gamma_k)+(\omega(\alpha)+1)\ell(\gamma_k)-\log{8}-1 \geq 2\sigma(\gamma_k)+(\omega(\alpha)+1)\ell(\gamma_k)-4.
 $$
Recall that we can assume $\ell(\gamma_k)$ is as small as we want, so we assume that it is less than $1$. 
\end{proof}

 \begin{Lem}\label{lemma: length.type2}
 Assume that $\alpha \in O(collar,k)$ is a geodesic arc of type $2$ and $\alpha \cap O(thin,k) \not= \emptyset$. Then we have:
$$
\ell(\alpha) \geq \max \, ( \, (w(\alpha)+1)\ell(\gamma_k)\, , \, 2\sqrt{(w(\alpha)+1)\ell(\gamma_k)\sigma(\gamma_k)} \, )
$$ 
when $\ell(\gamma_k)\leq c_0$.
\end{Lem}

\begin{proof}

Similar to the proof of Lemma \ref{lemma: length.type1} define $\widetilde{\gamma_k}$, $\widetilde{\alpha}, A, A', B, B'$. We know that $\ell(AA')=\ell(BB')=\sigma(\gamma_k)$ and $\ell(A'B')\geq w(\beta)\ell(\gamma_k)$ (see Figure \ref{fig: proof.type2}).  \\

\begin{figure}[H]
    \centering

\tikzset{every picture/.style={line width=0.75pt}} 

\begin{tikzpicture}[x=0.75pt,y=0.75pt,yscale=-0.8,xscale=0.8]

\draw    (144,224) -- (501,225) ;
\draw    (222.5,224.5) .. controls (222,109) and (403,109) .. (406,223) ;
\draw  [color={rgb, 255:red, 0; green, 0; blue, 0 }  ][line width=3] [line join = round][line cap = round] (227,123) .. controls (229.69,123) and (227.26,125.77) .. (227,125) .. controls (226.67,124) and (226.95,122) .. (228,122) ;
\draw  [color={rgb, 255:red, 0; green, 0; blue, 0 }  ][line width=3] [line join = round][line cap = round] (379,117) .. controls (380,117) and (381,120.91) .. (381,118) ;
\draw    (228,123) .. controls (256,74) and (354,75) .. (380,119) ;
\draw    (228,123) .. controls (242,129) and (251,144) .. (254,155) ;
\draw    (363,150) .. controls (367.07,137.87) and (372.07,127.87) .. (380,119) ;
\draw   (251.62,149.23) -- (255.69,147.55) -- (258.06,153.33) ;
\draw   (357.39,147.75) -- (360.16,140.83) -- (365.77,143.08) ;
\draw  [line width=3] [line join = round][line cap = round] (252.87,155.41) .. controls (253.2,155.41) and (253.53,155.41) .. (253.87,155.41) ;
\draw  [line width=3] [line join = round][line cap = round] (361.87,149.41) .. controls (362.2,149.41) and (362.53,149.41) .. (362.87,149.41) ;

\draw (207,94.4) node [anchor=north west][inner sep=0.75pt]    {$A$};
\draw (387,104.4) node [anchor=north west][inner sep=0.75pt]    {$B$};
\draw (225,148.4) node [anchor=north west][inner sep=0.75pt]    {$A'$};
\draw (368,139.4) node [anchor=north west][inner sep=0.75pt]    {$B'$};
\draw (278,68.4) node [anchor=north west][inner sep=0.75pt]    {$\tilde{\alpha }$};
\draw (301,114) node [anchor=north west][inner sep=0.75pt]    {$\widetilde{\gamma _{k}}$};

\end{tikzpicture}

   \caption{}
    \label{fig: proof.type2}
\end{figure}

  From the hyperbolic geometry formula for the quadrilateral $ABB'A'$ with two right angles  (see \cite[Equ. 2.3.2.]{Bsr}) we have:
 $$
 \cosh(\ell(\widetilde{\alpha}))=\cosh(\sigma(\gamma_k))^2\cosh(\ell(A'B'))-\sinh(\sigma(\gamma_k))^2,
 $$
 or equivalently:
 
 \begin{equation}\label{equ: A_n}
  \sinh(\ell(\widetilde{\alpha})/2)=\cosh(\sigma(\gamma_k))\sinh(\ell(A'B')/2). 
 \end{equation}
 We used the fact that $\cosh^2 x-\sinh^2 x=1$ and $\cosh x=2\sinh^2 x/2+1$.
 
 Note that $\ell(\alpha)>\sinh^{-1}(1)$, since, from Equation (\ref{equ: A_n}), we have: 
 $$
 \ell(\alpha) \geq 2\sinh^{-1}(\cosh(\sigma(\gamma_k))\sinh(\ell(\gamma_k)/2))=2\sinh^{-1}(\cosh(\ell(\gamma_k)/2)) \geq \sinh^{-1}(1).
 $$
 
As we explained before, we can assume $\ell(\gamma_k)$ is small enough, so $\cosh(\sigma(\gamma_k))$ can be assumed to be large enough. Therefore, from Equation (\ref{equ: A_n}), we conclude that
  $\sinh(\ell(\alpha)/2)\geq \sinh(\ell(A'B')/2+b_0)$ for a constant $\ell(\gamma_K) \leq b_0$. Now, we have:
  $$
  \ell(\alpha) \geq \ell(A'B')+2b_0 \geq \ell(\gamma_k)\omega(\alpha)+2b_0 \geq \ell(\gamma_k)(\omega(\alpha)+1).
  $$
 Now, we aim to prove the second lower bound in the statement of this lemma.

 Let $N_0$ be a sufficiently large constant to be chosen later.  
We can restrict to the case $w(\alpha)> N_0$; otherwise,
$$
2\sqrt{(w(\alpha)+2)\,\ell(\gamma_k)\,\sigma(\gamma_k)} 
   \;\leq\; 2\sqrt{(N_0+2)\,\ell(\gamma_k)\,\sigma(\gamma_k)}.
$$
By choosing $\ell(\gamma_k)$ small enough, we can make the right-hand side arbitrarily small.  
In particular, it can be arranged to be less than $\sinh^{-1}(1)$, which is $\leq \ell(\alpha)$, as required.

 Now consider the following cases:
 
 \begin{itemize}
     \item When $1/(w(\alpha)+1) \geq \ell(\gamma_k)/2$:
     from the inequalities $\cosh(\sigma(\gamma_k)) \geq \cosh(\log(2/\ell(\gamma_k)))\geq 1/\ell(\gamma_k) $, $\sinh(x)\geq x$, and Equation (\ref{equ: A_n}), we have:
     $$
\ell(\alpha) \geq 2\sinh^{-1}(w(\alpha)/2) \geq 2\log(w(\alpha)/2). 
     $$
     On the other hand, we know that $t\sigma(t)$ is increasing when $t$ is sufficiently small. Therefore, we have:
     $$
     \log(w(\alpha)/2) \geq \sqrt{4\log(w(\alpha)+1)} \geq \sqrt{2\sigma(1/(w(\alpha)+1))} \geq \sqrt{(w(\alpha)+1)\ell(\gamma_k)\sigma(\gamma_k)} 
     $$
     So, $\ell(\alpha) \geq 2\sqrt{(w(\alpha)+1)\ell(\gamma_k)\sigma(\gamma_k)}$. The first inequality holds because we have assumed $w(\alpha)>N_0$, where $N_0$ is chosen to be sufficiently large.

     \item When $1/(w(\alpha)+1) < \ell(\gamma_k)/2$:
     in this case, we have:
     $$
     e^{\ell(\alpha)/2}\geq \sinh{\frac{\ell(\alpha)}{2}}\geq \frac{e^{\sigma(\gamma_k)}}{2}\frac{e^{(\omega(\alpha)+1)\ell(\gamma_k)/2}}{c_0}.
     $$
     In the second inequality, we used the fact that $\sinh(x) \geq e^x/c_0$ for a constant $c_0$ when $x > 1$. Now, we obtain 
     $$
     \ell(\alpha) \geq 2\sigma(\gamma_k)+(\omega(\alpha)+1)\ell(\gamma_k)-c_0' \geq \sigma(\gamma_k)+(\omega(\alpha)+2)\ell(\gamma_k) \geq 2\sqrt{(\omega(\alpha)+1)\ell(\gamma_k)\sigma(\gamma_k)},
     $$
     
     as required.
 \end{itemize}
 
 \end{proof}
 
 Assume that $A_n, B_n$ represent geodesic arcs in $O(\text {thin},k)$ with winding numbers $n$ of types $1$ and $2$, respectively. 
 \begin{Lem}\label{lemma: int}
 The following table shows an upper bound on the intersection number between geodesic arcs based on their types and winding numbers. The indices $m$ and $n$ refer to the winding numbers.
 
 \begin{figure}[H]
     \centering

\tikzset{every picture/.style={line width=0.75pt}} 

\begin{tikzpicture}[x=0.75pt,y=0.75pt,yscale=-1,xscale=1]

\draw   (81.8,39.52) -- (290.8,39.52) -- (290.8,190.52) -- (81.8,190.52) -- cycle ;
\draw   (81.8,39.52) -- (150.15,39.52) -- (150.15,190.52) -- (81.8,190.52) -- cycle ;
\draw    (218.15,40.52) -- (218.15,190.52) ;
\draw    (81.8,90.3) -- (290.8,91.3) ;
\draw    (81.8,139.3) -- (290.8,140.3) ;

\draw (104,105.4) node [anchor=north west][inner sep=0.75pt]    {$A_{n}$};
\draw (103,156.4) node [anchor=north west][inner sep=0.75pt]    {$B_{n}$};
\draw (164,53.4) node [anchor=north west][inner sep=0.75pt]    {$A_{m}$};
\draw (240,54.4) node [anchor=north west][inner sep=0.75pt]    {$B_{m}$};
\draw (145,104.4) node [anchor=north west][inner sep=0.75pt]  [font=\scriptsize]  {$ \begin{array}{l}
\leq n+m+2\\

\end{array}$};
\draw (235,109.4) node [anchor=north west][inner sep=0.75pt]  [font=\scriptsize]  {$\leq m+1$};
\draw (164,159.4) node [anchor=north west][inner sep=0.75pt]  [font=\scriptsize]  {$\leq n+1$};
\draw (212,156.4) node [anchor=north west][inner sep=0.75pt]  [font=\scriptsize]  {$ \begin{array}{l}
\leq 2\min(n,m)\\
+2
\end{array}$};
\draw (90,48) node [anchor=north west][inner sep=0.75pt]  [font=\small] [align=left] {type of\\subarc};

\end{tikzpicture}

   \caption{Upper bounds on the intersection number}
     \label{fig: int.table}
 \end{figure}
  
 \end{Lem}
\begin{proof}

Recall that geodesic arcs have the minimum intersection number among the curves in their homotopy classes with fixed endpoints. Therefore, it is enough to prove the bounds for some representatives in their homotopy classes.
The universal cover of the thin part is an infinite rectangle $R=[-\infty,\infty]\times[0,1]$ with a deck transformation $(x,y) \to (x+1,y)$. Hence, the rectangle between $x=k$ and $x=k+1$ is a fundamental domain. Let $\pi$ be the universal covering map, which is a projection from $R$ to the thin part.

In the following, we describe the proper representatives in homotopy classes. Without loss of generality, assume that $n \leq m$.  
\begin{itemize}
    \item We can apply a Dehn twist around $\gamma_k$ on the thin part, and it does not change the intersection number between the arcs. When we have two arcs of type $A_n, A_m$, we can apply some Dehn twists to obtain arcs of types $A_0$ and $A_{m+n}$(or $A_{m-n}$). Assume that the arcs are line segments $P_1P_2$ and $Q_1Q_2$ in $R$, where the coordinates of the points are $P_1=(0,0), P_2=(c_0,1), Q_1=(0,0), Q_2=(m+n+c_0',1)$, for some constants $c_0,c_0' \in [0,1)$. Now, it is easy to see that the projections of these line segments to the thin part intersect at $\leq n+m+2$ points. Since $\pi(P_1P_2)$ intersects the segment $\pi(Q_1Q_2\cap [x,x+1]\times [0,1])$ at most once.

    \item Similarly, when we have arcs of type $A_n,B_m$, we may instead consider arcs of type $A_0,B_m$ by applying some Dehn twists around $\gamma_k$. The arc of type $B_m$ is a curve $Q_1Q_2$ that starts at the point $Q_1=(0,0)$ and ends at the point $Q_2=(m+c_0',0)$ for some $c_0' \in [0,1]$. Let the line segment $P_1P_2$ be the arc of type $A_0$, where $P_1=(0,0)$ and $P_2=(c_0,1)$ for $c_0 \in [0,1)$. Now, it is easy to see that if the curve is chosen properly, then the number of intersections between the projection of these arcs to the thin part is $\leq m+1$. Since $\pi(P_1P_2)$ intersects $\pi(Q_1Q_2 \cap [x,x+1]\times [0,1])$ at most once.
    
    \item Assume that we have arcs of type $B_n,B_m$. We may assume that the arc of type $B_n$ is in the lower half $((-\infty,\infty)\times[0,1/2])$ of $R$, and the other arc is constructed by three subarcs as follows. The middle subarc is in the upper half of $R$ and the end subarcs are vertical lines from $y=0$ to $y=1/2$. Then, similar to the previous case, we can see that each vertical line intersects the arc of type $B_n$ in $\leq n+1$ points. Therefore, their intersection number is $\leq 2n+2$.  
    
\end{itemize}

A more geometric argument for the case $i(A_n,A_m)$ can be found in \cite[Lemma 3.6]{Tina-equi}. That proof uses a pictorial approach by representing the proper homotopy class on the surface, and the same reasoning can be adapted to the other cases as well.
\end{proof}

\textbf{Proof of Proposition \ref{Prop: I.thin}.} 
Let $\eta_0=\eta \cap O(collar,k)$, $\zeta_0=\zeta\cap O(collar,k)$, $\omega(\eta_0)=n$, and $\omega(\zeta_0)=m$. Without loss of generality, we assume $n \leq m$. Define: 

$$
 I_0=\frac{i_{O(thin,k)}(\eta_0,\zeta_0)}{\ell_{O(collar,k)}(\eta)\ell_{O(collar,k)}(\zeta)}.
 $$
   Now, we apply the above lemmas to bound $I_0$ based on the types of the arcs.

\begin{itemize}
\item when $\eta_0,\zeta_0$ are of type one, we have:
$$
 I_0 \leq \frac{m+n+2}{\ell(\eta_0)\ell(\zeta_0)}.
 $$
 From Lemma \ref{lemma: length.type1}, we have:
 $$
 \ell(\eta_0) \geq 2\sigma(\gamma_k)+(n+1)\ell(\gamma_k)-4, 
\, \, \, \,  \ell(\zeta_0) \geq 2\sigma(\gamma_k)+(m+1)\ell(\gamma_k)-4.
$$

We aim to prove that $\ell(\eta_0)\ell(\zeta_0)$ is $\geq 2\ell(\gamma_k)\sigma(\gamma_k)(m+n+2)$. Consider the following cases. We obtain the proper lower bound in each case separately.

\begin{enumerate}
    
    \item If $n+1 \geq 4(m+1)/\sigma(\gamma_k)^2$ then
    $$
    \ell(\eta_0)\ell(\zeta_0)\geq 2\sigma(\gamma_k)(m+n+2)\ell(\gamma_k)+e,
     $$
    where 
    $$e=4\sigma(\gamma_k)^2+(n+1)(m+1)\ell(\gamma_k)^2-4\ell(\gamma_k)(m+n+2)-16 \sigma(\gamma_k). 
     $$
     We aim to show $e\geq 0$. Since $m \geq n$, we have
    \begin{align}
 & \sigma(\gamma_k)^2+ \frac{1}{2}(n+1)(m+1)\ell(\gamma_k)^2 \geq 2\sigma(\gamma_k)\ell(\gamma_k)\sqrt{\frac{1}{2}(n+1)(m+1)}\geq    \nonumber \\ 
 & \sigma(\gamma_k)\ell(\gamma_k)(n+1)\geq 4\ell(\gamma_k)(n+1), \nonumber
\end{align}
and from the lower bound on $n+1$, we have
\begin{align}
 & 2\sigma(\gamma_k)^2+ \frac{1}{2}(n+1)(m+1)\ell(\gamma_k)^2 \geq 2\sigma(\gamma_k)\ell(\gamma_k)\sqrt{(n+1)(m+1)}\geq    \nonumber \\ 
 & 4\ell(\gamma_k)(m+1), \nonumber
\end{align}
    and finally, we have $ \sigma(\gamma_k)^2 \geq 16\sigma(\gamma_k).$
Therefore, by adding these three inequalities, we obtain $e \geq 0$.

\item If $4(m+1)/\sigma(\gamma_k)^2\geq n+1 \geq \sigma(\gamma_k)/\ell(\gamma_k)$, then
$$
\ell(\eta_0) \geq 3\sigma(\gamma_k)-4 \geq \frac{5\sigma(\gamma_k)}{2},
$$
$$
\ell(\zeta_0) \geq \frac{4(m+1)\ell(\gamma_k)}{5}+ \frac{(m+1)\ell(\gamma_k)}{5} \geq
$$
$$
\frac{4(m+1)\ell(\gamma_k)}{5}+ \frac{(n+1)\sigma(\gamma_k)^2\ell(\gamma_k)}{20} \geq \frac{4(m+n+2)\ell(\gamma_k)}{5}.        
$$
\item If $\sigma(\gamma_k)/\ell(\gamma_k) \geq n+1$ then
$$
\ell(\eta_0) \geq 2 \sigma(\gamma_k) \, \, \, \, \, \ell(\zeta_0) \geq \sigma(\gamma_k)+(m+1)\ell(\gamma_k) \geq  (m+n+2)\ell(\gamma_k).    
$$
\end{enumerate}

\item when $\eta_0$ and $\zeta_0$ are of type one and two, respectively, we have:
$$
I_0 \leq \frac{m+1}{\ell(\eta_0)\ell(\zeta_0)},
$$
and 
$$
\ell(\eta_0) \geq 2\sigma(\gamma_k), \, \, \, \, \, \ell(\zeta_0) \geq (m+1)\ell(\gamma_k) .
$$
The case that $\eta_0$ is type two and $\zeta_0$ is type one is similar.

\item when $\eta_0$ and $\zeta_0$ are of type two, we have:
$$
I_0 \leq \frac{2\min(n,m)+2}{\ell(\eta_0)\ell(\zeta_0)},
$$
and we know

$$
\ell(\eta_0), \ell(\zeta_0) \geq 2\sqrt{(\min(n,m)+1)\ell(\gamma_k)\sigma(\gamma_k)}.
$$
\item When one of the arcs is $\gamma_k$ and the other arc is of type $1$ then we have:
$$
I_0 \leq \frac{1}{2\sigma(\gamma_k)\ell(\gamma_k)}.
$$
Note that $\gamma_k$ and an arc of type $2$ do not intersect.
\end{itemize}
\qed

We next state several consequences of Theorem \ref{theorem: main.order}.

 Recall that $I_{simple}$ is the supremum of the normalized intersection number between simple closed geodesics. We have the following:
  
 \begin{Cor}\label{cor: I_simple-asymp}
For $X\in \Mg$, we have:
$$
I_{simple}(X) \sim \frac{1}{2\sys(X)\log(1/\sys(X))},
$$
as $X \to \infty$ in $\mathcal{M}_g$. 

 \end{Cor}
\begin{proof}
  On one hand, $I_{simple}(X)\leq I(X)$, on the other hand, in Lemma \ref{lemma: geod.int.sys} the curves $\gamma_1$ and $\gamma_2$ are simple which gives a lower bound for $I_{simple}(X)$. So, from Theorem \ref{theorem: main.order}, we conclude that $I_{simple}(X)$ has the same asymptotic behavior as $I(X)$.  
\end{proof} 

 The thick part of $\mathcal{M}_g$ (the subset containing surfaces whose systole $\geq \epsilon$) is compact and $I_{simple}$ is a continuous function (similar to Lemma \ref{lemma: I.cont}). Therefore, we obtain the following result. 

 \begin{Cor}\label{cor: I_simple=I}
 For $X \in \Mg$, $I_{simple}(X)/I(X) \to 1$ as $\sys(X) \to 0$. As a result, there exists a constant $c_g>0$, depending only on the genus $g$, such that
\[
c_g\, I(X) \le I_{\mathrm{simple}}(X) \le I(X).
\]

 \end{Cor}

 From the continuity of $I(X)$ (Lemma \ref{lemma: I.cont}) and Theorem \ref{theorem: main.order} we obtain an estimate of $I(X)$ up to a multiplicative constant.

\begin{Cor}\label{cor: I.asymp.c_g}
For $X \in \Mg$, we have:
$$
I(X) \asymp_g \frac{1}{s\log(1/s)},
$$
where $s= \min(\sys(X),1/2)$.
\end{Cor}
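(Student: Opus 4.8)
The plan is to deduce this from Theorem~\ref{theorem: main.order} together with Mumford compactness of the thick part of $\Mg$ and the continuity of $I$ from Lemma~\ref{lemma: I.cont}. Write $h(s):=\tfrac{1}{s\log(1/s)}$ for $s\in(0,1/2]$; note $h(s)>0$ there since $\log(1/s)\geq\log 2>0$. I will split $\Mg$ into the locus where $\sys(X)$ is small and its complement, check $I(X)\asymp h(s)$ on each piece with a constant depending at worst on $g$, and then take the larger constant.

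\textbf{Small systole.} By Theorem~\ref{theorem: main.order}, $I(X)\cdot 2\sys(X)\log(1/\sys(X))\to 1$ as $X\to\infty$ in $\Mg$; since $X\to\infty$ in $\Mg$ is equivalent to $\sys(X)\to 0$, this unpacks to the statement that there is a threshold $\epsilon_0\in(0,1/2]$ with
$$
\sys(X)<\epsilon_0 \ \Longrightarrow\ I(X)\cdot 2\sys(X)\log(1/\sys(X))\in[1/2,2].
$$
For such $X$ one has $s=\sys(X)$, so $I(X)\in[h(s)/4,\,h(s)]$, an absolute two-sided bound.

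\textbf{Bounded systole.} The set $K_{\epsilon_0}:=\{X\in\Mg:\sys(X)\geq\epsilon_0\}$ is compact by Mumford's criterion. On $K_{\epsilon_0}$ the function $I$ is continuous (Lemma~\ref{lemma: I.cont}); it is also everywhere positive, since picking two closed geodesics that cross gives $i>0$ with finite lengths, so $I(X)>0$ (and $I(X)<\infty$ by Proposition~\ref{prop: int.bnd}). Hence $I$ attains a positive minimum $a_g$ and a finite maximum $b_g$ on $K_{\epsilon_0}$. On the other hand, for $X\in K_{\epsilon_0}$ we have $s=\min(\sys(X),1/2)\in[\epsilon_0,1/2]$, and $h$ is continuous and positive on that compact interval, hence bounded between two positive constants depending only on $\epsilon_0$. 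Combining these gives $I(X)\asymp_g h(s)$ on $K_{\epsilon_0}$.

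\textbf{Conclusion.} Since $\Mg=K_{\epsilon_0}\cup\{\sys(X)<\epsilon_0\}$ and $I(X)\asymp h(s)$ holds on each piece with a constant depending at worst on $g$ (through $\epsilon_0$ and the compactness bounds $a_g,b_g$), taking the larger of the two constants yields $I(X)\asymp_g \tfrac{1}{s\log(1/s)}$ on all of $\Mg$. The only point requiring any care is the passage from the "$\sim$" statement of Theorem~\ref{theorem: main.order} to a genuine two-sided estimate valid for every sufficiently small $\sys(X)$, i.e.\ rereading "$X\to\infty$ in $\Mg$" as "$\sys(X)$ below a fixed threshold"; and the observation that the constant coming from the compact piece depends on $g$, which is exactly why the final relation is $\asymp_g$ rather than $\asymp$. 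There is no analytic obstacle — the substance is entirely in Theorem~\ref{theorem: main.order}.
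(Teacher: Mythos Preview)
Your proof is correct and follows exactly the route the paper indicates: the paper simply remarks that the corollary follows ``from the continuity of $I(X)$ (Lemma~\ref{lemma: I.cont}) and Theorem~\ref{theorem: main.order},'' and you have spelled out precisely that argument via Mumford compactness on the thick part and the asymptotic on the thin part. The only minor comment is that the threshold $\epsilon_0$ itself may depend on $g$ (since Theorem~\ref{theorem: main.order} is stated for fixed $g$), but you already absorb all such dependence into the final $\asymp_g$.
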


\section{Minimum on moduli space}\label{sec: min}

In this section, we prove Proposition \ref{Prop: I.min}, which states

$$
\min \limits_{X \in \Mg} I(X) \asymp \frac{1}{(\log g)^2}, 
$$
uniformly in $g$.

First, we prove the existence of a pair of pants with short cuffs.

\begin{Lem}\label{lemma: short.pants}
Given a surface $X \in \Mg$, it has a pair of pants with cuffs of length $\leq c_0\log g$. 
\end{Lem}
\begin{proof}
 Let $\gamma_1$ be a systole of $X$. Enlarge the collar of $\gamma_1$ from one side until it runs into itself for the first time at a point $Q$. This is homeomorphic to a pair of pants. It might also be homeomorphic to a genus $0$ surface with $n$ boundaries, but the proof would be similar.  Consider the geodesic representative of its cuffs to obtain a pair of pants $P$ with geodesic boundaries $\gamma_1,\gamma_2,\gamma_3$. We know $\sys(X) \leq 2\log(4g-2)\leq 6\log(g)$ \cite[Lem. 5.2.1.]{Bsr}.
Now we show $\ell(\gamma_2),\ell(\gamma_3) \leq c_0\log g$ too.
Let $BC$ be a shortest arc between $\gamma_1$ and the seam between $\gamma_2$ and $\gamma_3$ where $B \in \gamma_1$. Assume that $AE$ is the seam between $\gamma_1,\gamma_2$ where $A\in \gamma_1$. Let $D \in \gamma_2$ be the base of the seam between $\gamma_2,\gamma_3$. See Figure \ref{fig: pentagon}.

\begin{figure}[H]
    \centering

\tikzset{every picture/.style={line width=0.75pt}} 

\begin{tikzpicture}[x=0.75pt,y=0.75pt,yscale=-1,xscale=1]

\draw   (167.24,121.09) .. controls (176.03,120.91) and (183.39,132.73) .. (183.68,147.49) .. controls (183.97,162.24) and (177.07,174.34) .. (168.28,174.51) .. controls (159.49,174.68) and (152.13,162.86) .. (151.84,148.11) .. controls (151.55,133.36) and (158.45,121.26) .. (167.24,121.09) -- cycle ;
\draw   (282.93,64.85) .. controls (293.78,62.79) and (305.28,75.36) .. (308.61,92.92) .. controls (311.93,110.48) and (305.83,126.38) .. (294.98,128.43) .. controls (284.13,130.49) and (272.63,117.92) .. (269.31,100.37) .. controls (265.98,82.81) and (272.08,66.91) .. (282.93,64.85) -- cycle ;
\draw   (296.31,153.92) .. controls (304.44,154.05) and (310.84,165.14) .. (310.62,178.68) .. controls (310.39,192.22) and (303.61,203.09) .. (295.48,202.95) .. controls (287.35,202.81) and (280.95,191.72) .. (281.18,178.18) .. controls (281.41,164.64) and (288.18,153.78) .. (296.31,153.92) -- cycle ;
\draw    (167.24,121.09) .. controls (226.73,104.59) and (247.73,81.59) .. (282.93,64.85) ;
\draw    (168.28,174.51) .. controls (210.2,165.2) and (269.2,206.2) .. (295.48,202.95) ;
\draw    (288.41,129) .. controls (271.2,128.2) and (273.2,163.2) .. (296.31,153.92) ;
\draw    (183.68,147.49) .. controls (224.2,140.2) and (258.2,145.2) .. (277.2,148.2) ;
\draw   (183.68,142.29) -- (189.17,142.29) -- (189.17,146.03) ;
\draw   (176.27,118.99) -- (177.73,122.59) -- (174.17,124.03) ;
\draw   (282.62,130.41) -- (278.53,126.33) -- (281.53,123.33) ;
\draw   (270.78,146.72) -- (271.73,141.59) -- (276.86,142.54) ;
\draw   (270.57,79.99) -- (265.73,77.59) -- (267.72,73.58) ;
\draw  [line width=3] [line join = round][line cap = round] (166.17,121.03) .. controls (166.17,120.7) and (166.17,120.36) .. (166.17,120.03) ;
\draw  [line width=3] [line join = round][line cap = round] (183.17,147.03) .. controls (183.5,147.03) and (183.84,147.03) .. (184.17,147.03) ;
\draw  [line width=3] [line join = round][line cap = round] (286.17,128.03) .. controls (286.5,128.03) and (286.84,128.03) .. (287.17,128.03) ;
\draw  [line width=3] [line join = round][line cap = round] (276.17,68.03) .. controls (275.84,68.03) and (275.5,68.03) .. (275.17,68.03) ;
\draw  [line width=3] [line join = round][line cap = round] (276.94,146.98) .. controls (276.94,147.31) and (276.94,147.64) .. (276.94,147.98) ;

\draw (159,101.4) node [anchor=north west][inner sep=0.75pt]    {$A$};
\draw (168,135.4) node [anchor=north west][inner sep=0.75pt]    {$B$};
\draw (263.62,150.81) node [anchor=north west][inner sep=0.75pt]    {$C$};
\draw (285,109.4) node [anchor=north west][inner sep=0.75pt]    {$D$};
\draw (267,50.4) node [anchor=north west][inner sep=0.75pt]    {$E$};
\draw (131,149.4) node [anchor=north west][inner sep=0.75pt]    {$ \begin{array}{l}
\gamma _{1}\\
\end{array}$};
\draw (304,71.4) node [anchor=north west][inner sep=0.75pt]    {$ \begin{array}{l}
\gamma _{2}\\
\end{array}$};
\draw (307,169.4) node [anchor=north west][inner sep=0.75pt]    {$ \begin{array}{l}
\gamma _{3}\\
\end{array}$};

\end{tikzpicture}
    \caption{}
    \label{fig: pentagon}
\end{figure}
 Now consider the right-angle pentagon $ABCDE$. It is well known that $\ell(\gamma_2)=2\ell(ED)$. From the trigonometry formula, \cite[Thm. 2.3.4.]{Bsr}, we have:
 $$
 \cosh(\ell(\gamma_2)/2)=\cosh(ED)=\sinh{\ell(AB)}\sinh{\ell(BC)}. 
 $$
 Note that $\ell(AB) \leq sys(X)$ and $\ell(BC) \leq d(Q,\gamma_1) \leq \sinh^{-1}(4\pi(g-1)/\sys(X))$ (see proof of Lemma \ref{lemma: geod.int.sys} for the last inequality).
 
 Therefore, we have:
 $$
 \cosh(\ell(\gamma_2)/2) \leq \frac{ \sinh(\sys(X))4\pi(g-1)}{\sys(X)}.
 $$
 Moreover, we can see $\sinh(x)/x\leq c_0g^2$ when $x \in (0,2\log(4g-2)]$. As a result, $\ell(\gamma_2)\leq 2\log (c_0 g^3) \leq c_0'\log g$. Similarly, we can show $\ell(\gamma_3) \leq c_0\log g$.  

\end{proof} 

A closed geodesic with exactly one self-intersection is called a \textit{figure eight}. For example, see Figure \ref{fig: fig8}.

\begin{figure}[H]
    \centering

\tikzset{every picture/.style={line width=0.75pt}} 

\begin{tikzpicture}[x=0.75pt,y=0.75pt,yscale=-1,xscale=1]

\draw [color={rgb, 255:red, 0; green, 0; blue, 0 }  ,draw opacity=1 ]   (248.39,222.51) .. controls (207.81,251.73) and (192.18,196.24) .. (148.41,220.58) ;
\draw [color={rgb, 255:red, 0; green, 0; blue, 0 }  ,draw opacity=1 ]   (148.41,220.58) .. controls (88.91,245.43) and (58.06,185.82) .. (88.74,151.41) ;
\draw [color={rgb, 255:red, 0; green, 0; blue, 0 }  ,draw opacity=1 ]   (248.39,222.51) .. controls (282.06,188.16) and (251,116) .. (188.72,153.35) ;
\draw [color={rgb, 255:red, 0; green, 0; blue, 0 }  ,draw opacity=1 ]   (235.13,184.25) .. controls (207.74,203.72) and (202.86,197.63) .. (184.1,185.26) ;
\draw [color={rgb, 255:red, 0; green, 0; blue, 0 }  ,draw opacity=1 ]   (188.72,153.35) .. controls (148.14,182.57) and (129.31,122.19) .. (88.74,151.41) ;
\draw [color={rgb, 255:red, 0; green, 0; blue, 0 }  ,draw opacity=1 ]   (227.03,189.09) .. controls (214.23,178.84) and (203.26,183.1) .. (195.09,191.94) ;
\draw [color={rgb, 255:red, 0; green, 0; blue, 0 }  ,draw opacity=1 ]   (154.01,184.58) .. controls (129.68,201.12) and (104.84,192.63) .. (96.08,180.46) ;
\draw [color={rgb, 255:red, 0; green, 0; blue, 0 }  ,draw opacity=1 ]   (141.89,190.35) .. controls (128.21,174.08) and (113.28,177.12) .. (104.09,186.94) ;
\draw  [dash pattern={on 0.84pt off 2.51pt}]  (93.2,171.2) .. controls (131.4,126.4) and (222.2,258.2) .. (248.2,196.2) ;
\draw  [dash pattern={on 0.84pt off 2.51pt}]  (93.2,171.2) .. controls (72.2,192.2) and (111.2,237.2) .. (158.2,200.2) ;
\draw  [dash pattern={on 0.84pt off 2.51pt}]  (158.2,200.2) .. controls (217.2,144.2) and (256.2,164.2) .. (248.2,196.2) ;

\end{tikzpicture}

    \caption{A figure eight closed geodesic}
    \label{fig: fig8}
\end{figure}

\begin{Lem}\label{lemma: fig8}
For $X \in \Mg$, there is a figure eight closed geodesic with length $\leq c_0\log g$.
\end{Lem}
\begin{proof}
According to Lemma \ref{lemma: short.pants}, there is a pair of pants $P$ with cuffs $\gamma_1,\gamma_2,\gamma_3$ of length $\leq c_0\log g$. 
Let $\gamma$ be a figure eight geodesic in $P$. Then, using trigonometry in hyperbolic geometry \cite[Eqn. 4.2.3.]{Bsr} we have:
$$
\cosh(\frac{\ell(\gamma)}{2})=\cosh(\frac{\ell(\gamma_3)}{2})+2\cosh(\frac{\ell(\gamma_1)}{2})\cosh(\frac{\ell(\gamma_2)}{2}) \leq c_0g^{2c_0'}
$$

As a result, $\ell(\gamma)\leq c\log(g)$ for a constant $c>0$. 
\end{proof}

\textbf{Proof of Proposition \ref{Prop: I.min}.}
It is known that there is a hyperbolic surface $X_0 \in \Mg$ such that $\sys(X_0) \geq c_0\log g$ \cite{sys}\cite{buser-sarnak}. Therefore, by Proposition \ref{prop: int.bnd} we have $I(X_0)\leq b/(\log g)^2$ where $b=4/c_0^2$.  
On the other hand, by Lemma \ref{lemma: fig8}, we obtain $I(X)\geq 1/(c\log g)^2$ for all $X \in \Mg$. 
\qed

\section{Finite volume surfaces with cusps}\label{sec: cusp}

In this section, we prove Theorem \ref{thrm: finite.vol}.

Consider a proper pants decomposition of $X$ where pairs of pants may have cusps instead of geodesic boundaries. Similar to the compact case, we can define thin and thick decomposition. For a cusp $c$, define the thin part $O(thin, c)$ as the horoball neighborhood of $c$ whose horocycle boundary has length $1$. Note that the length of a thin part's boundary tends to $1$ while the core closed geodesic's length converges to $0$.
Similarly, we define the collar $O(collar,c)$ of $c$ as the horoball neighborhood of $c$ with the horocycle boundary of length $2$. We can obtain results similar to those in the compact case. However, we do not repeat the proofs here and we only state the results.
\begin{itemize}
    \item simple geodesic arcs with endpoints on the horocycle of length $2$ are disjoint from the thin part of the cusp (similar to Lemma \ref{lemma: has.int}),
    \item the distance between two cuffs of a thick part is $\geq c_g$ since the distance between the horocycles of length $1$ and $2$ is $\log 2$,
    \item the distances between the seams of a thick part are $\geq c_g'$.
\end{itemize}  

Let $\gamma_n$ be a geodesic arc with endpoints on the horocycle of length $2$ and with a winding number $n$ around the cusp. We can see:
\begin{itemize}
    \item $\ell(\gamma_n) \in [2\log n,2\log n+\log 24]$ using the distance formula \cite[Equ. 1.1.2]{Bsr},
    \item the intersection number $i(\gamma_n,\gamma_m)$ between $\gamma_n$ and $\gamma_m$ is in $[s,s+2]$ where $s=2\min(n,m)$ (similar to Lemma \ref{lemma: int}),
    \item $n \leq 2/r$ when $\gamma_n$ does not enter the horoball with the horocycle boundary of length $r$. Moreover, when $n \leq (2/r)-1$, $\gamma_n$ does not enter the horoball whose boundary has length $r$.
\end{itemize}

\textbf{Proof of Theorem \ref{thrm: finite.vol}.} Consider a proper thin and thick decomposition of $X$. The interaction strength inside the thick parts is $\leq c_g$ (similar to Lemma \ref{lemma: I.thick}). When a thin part is the neighborhood of a closed geodesic, then, by the proof of Theorem \ref{theorem: main.order}, we can see that the interaction strength is asymptotically $1/(2\sys(X)\log(1/\sys(X)))$ as $\sys(X) \to 0$. Otherwise, when a thin part is the neighborhood of a cusp $c$, we can see for $n\leq m \leq 2/r$:
$$
\frac{i_{O(thin,c)}(\gamma_n,\gamma_m)}{\ell_{O(collar, c)}(\gamma_n)\ell_{O(collar,c)}(\gamma_m)}\leq \frac{2n+2}{4(\log n)^2}.
$$
Therefore, the interaction strength in $O(thin, c)$ is asymptotically $\leq 1/(r(\log 1/r)^2)$. This gives the proper upper bound for $I(X_r)$. On the other hand, the left-hand side ratio for $n=m=\lfloor (2/r)-1 \rfloor$ is asymptotically 
$$
\frac{2n}{(2\log(n)+\log 24)^2} \geq \frac{1}{r(\log \frac{1}{r})^2},
$$
which gives the required lower bound for $I(X_r)$ as $r \to 0$. \qed

\vspace{0.5em}
The subset of $\mathcal{M}_{g,n}$ including hyperbolic surfaces $X\in \mathcal{M}_{g,n}$ with $\sys(X) \geq \epsilon$ is compact. Therefore, we have:

\begin{Cor}\label{cor: finite.vol}
Fix $g\geq 2, n>0$ and let $X$ be a finite-volume hyperbolic surface of genus $g$ with $n$ cusps. Then we have:
$$
I(X_r) \asymp_{g,n} \max \left( \frac{1}{s\log(1/s)}, \frac{1}{r(\log(1/r))^2}\right)
 $$
 where $s=\min(\sys(X),1/2)$.
\end{Cor}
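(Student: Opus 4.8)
The plan is to combine Theorem~\ref{thrm: finite.vol} with the compactness of the thick part of moduli space. Write $N(X,r):=\max\bigl(\tfrac{1}{s\log(1/s)},\,\tfrac{1}{r(\log(1/r))^{2}}\bigr)$ with $s=\min(\sys(X),1/2)$, so that Theorem~\ref{thrm: finite.vol} says $I(X_r)/N(X,r)\to \tfrac{1}{2}$ as $\min(s,r)\to 0$, uniformly over all finite volume hyperbolic surfaces of genus $g$ with $n$ cusps and all $r<1$. First I would fix $\epsilon_0=\epsilon_0(g,n)>0$ small enough that $I(X_r)\in[\tfrac{1}{4}N(X,r),\,N(X,r)]$ whenever $\min(s,r)<\epsilon_0$; this already gives the claimed two-sided bound, with absolute constants, in that regime.

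It then remains to handle the complementary regime $\min(s,r)\ge\epsilon_0$, i.e.\ $\sys(X)\ge\epsilon_0$ and $r\ge\epsilon_0$. Here $X$ lies in the compact set $\{\sys(X)\ge\epsilon_0\}\subset\mathcal M_{g,n}$ and $s\in[\epsilon_0,1/2]$, so $\tfrac{1}{s\log(1/s)}$ — and, for $r$ in a fixed compact subinterval of $(0,1)$, also $\tfrac{1}{r(\log(1/r))^{2}}$ — is pinched between two positive constants depending only on $g,n$; thus $N(X,r)\asymp_{g,n}1$, and it suffices to prove $I(X_r)\asymp_{g,n}1$. For the upper bound I would observe that on $X_r$ the injectivity radius of $X$ is bounded below by a constant times $\min(\sys(X),r)\ge\epsilon_0$ — it decays only inside the cusp regions, where it is comparable to the length of the ambient horocycle, which is $\ge r$ on $X_r$ — and then run the square-packing argument of Proposition~\ref{prop: int.bnd} with $\sys(X)$ replaced by this constant, obtaining $i(\gamma_1,\gamma_2)\le c_{g,n}\,\ell(\gamma_1)\ell(\gamma_2)$ for all closed geodesics $\gamma_1,\gamma_2\subset X_r$, hence $I(X_r)\le c_{g,n}$. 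For the lower bound I would use that, since $g\ge 2$, a Bers pants decomposition of $X$ contains a pair of pants with three geodesic cuffs of length $\le L_{g,n}$ lying well inside the thick part of $X$, hence inside $X_r$; the figure-eight geodesic $\delta$ it carries (cf.\ Lemma~\ref{lemma: fig8}) has $\ell(\delta)\le c_{g,n}$ and self-intersection $1$, so $I(X_r)\ge i(\delta,\delta)/\ell(\delta)^{2}\ge c_{g,n}^{-2}$. Combining the two regimes yields $I(X_r)\asymp_{g,n}N(X,r)$, which is the corollary.

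The delicate part is the bounded regime $\min(s,r)\ge\epsilon_0$. For the upper bound, one must take care that the lower bound on the injectivity radius over all of $X_r$ is governed by $\min(\sys(X),r)$ and not by $\sys(X)$ alone, since a closed geodesic confined to $X_r$ may still run close to the cusps. For the lower bound, one must produce a short self-intersecting configuration that genuinely lies inside the compact surface-with-boundary $X_r$, uniformly in $X$ and $r$ — and when $r$ is close to $1$ one should either verify directly that the thick-part pair of pants above misses the (slightly shrunken) cusp neighbourhoods, or restrict $r$ to a compact subinterval of $(0,1)$ and supply a short intersecting pair there by hand. Both issues reduce to the compactness of $\{\sys(X)\ge\epsilon_0\}\subset\mathcal M_{g,n}$: on that set Bers' constant $L_{g,n}$ is finite, short closed geodesics are excluded, and bounded-length geodesics make no deep cusp excursions, which is exactly what makes all of the above bounds uniform.
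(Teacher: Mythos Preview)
Your approach is the same as the paper's: derive the two-sided estimate from Theorem~\ref{thrm: finite.vol} in the regime $\min(s,r)\to 0$, and handle the complementary bounded regime via compactness of the $\epsilon$-thick part of $\mathcal{M}_{g,n}$. The paper's justification is in fact a single sentence (``The subset of $\mathcal{M}_{g,n}$ including hyperbolic surfaces $X$ with $\sys(X)\ge\epsilon$ is compact''), so you have supplied considerably more detail than the original --- the explicit injectivity-radius upper bound and the figure-eight lower bound (using that $g\ge 2$ forces at least one all-geodesic pair of pants in any pants decomposition) are exactly the content one needs to make the compactness remark into an argument. Your flag about $r$ near $1$ is also well taken and is not addressed in the paper; restricting to $r$ in a compact subinterval of $(0,1)$ is the honest reading of the statement.
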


\section{Expected value on moduli space}\label{sec: random}
In this section, we prove Corollary \ref{cor: exp} which states:\\

\emph{The expected value of the interaction strength on} $\mathcal{M}_g,$ \emph{equipped with the Weil-Petersson measure, is finite.}

A volume form $\volwp$ on $\Mg$ is associated with the Weil-Petersson symplectic form on $\mathcal{T}_g$. This induces a probability measure $\mathbb{P}_g$ defined as the following:
$$
\mathbb{P}_g(A):= \frac{\volwp(A)}{\volwp(\Mg)}.
$$
Define $\mathcal{M}^{\epsilon}_g:= \{X \in \Mg | \sys(X) \leq \epsilon \}$. Mirzakhani showed that there is a constant $C_0$ such that  
\begin{equation}\label{equation: exp}
    \frac{1}{C_0} \epsilon^2 \leq \mathbb{P}_g(\mathcal{M}^{\epsilon}_g) \leq C_0 \epsilon^2,  
\end{equation}
for $g\geq 2$ \cite[\S5]{Mir.sys}\cite{Mir.WP}.\\

\textbf{Proof of Corollary \ref{cor: exp}.} 
The complement of $\mathcal{M}_g^{\epsilon}$ is compact. Therefore, it is enough to show that the integral on $\mathcal{M}_g^{\epsilon}$ is finite. Define $\mathcal{M}^{[a,b]}_g:= \{X \in \Mg |\, \sys(X) \in [a,b]\}$. From Equation (\ref{equation: exp}), we have: 
$$
 \int \limits_{\mathcal{M}^{[2^{-(n+1)},2^{-n}]}_g} \frac{1}{\sys(X)} \, d \volwp \leq \frac{c_0}{2^{n-1}}.
$$
Therefore, from Theorem \ref{theorem: main.order}, if $\epsilon \leq 1/2^{n_0}$ then
$$
\int \limits_{\mathcal{M}_g^{\epsilon}} I(X) \, d \volwp \leq \int \limits_{\mathcal{M}^{\epsilon}_g} \frac{1}{\sys(X)} \, d \volwp \leq \sum \limits_{m\geq n_0} \frac{c_0}{2^{m-1}}=\frac{c_0}{2^{n_0-2}},
$$
which is finite, as required.\qed 

  \vspace{0.5em}
  Note that from (\ref{equation: exp}) we know 
$$
\int \limits_{\mathcal{M}^{\epsilon}_g} \frac{1}{\sys(X)^2} \, d \volwp \geq \frac{1}{C_0},
$$
for $\epsilon>0$ small enough. Therefore, $\int \limits_{\Mg} \frac{1}{\sys(X)^2} d \volwp=\infty$. Consequently, the upper bound $4/\sys(X)^2$ on $I(X)$ is not sufficient to conclude Corollary \ref{cor: I.asymp.c_g}.

\bibliography{biblio}
\bibliographystyle{math}

\end{document}